\theoremstyle{plain}
\newtheorem{thm}{Theorem}
\newtheorem{prop}[thm]{{\bf Proposition}}
\newtheorem{lem}[thm]{{\bf Lemma}}
\newcounter{hyp_counter}
\theoremstyle{definition}
\newtheorem{claim}{Claim}
\newtheorem{defn}[thm]{Definition}
\theoremstyle{remark}
\newtheorem{rem}{Remark}
\newtheorem{question}{Question}
\newcommand{\Id}{\operatorname{Id}}
\newcommand{\Diff}{\operatorname{Diff}}
\newcommand{\GL}{\operatorname{GL}}
\newcommand{\Per}{\operatorname{Per}}
\newcommand{\R}{\mathbb{R}}
\newcommand{\SL}{\operatorname{SL}}
\newcommand{\vol}{\operatorname{vol}}
\newcommand{\Z}{\mathbb{Z}}
\newcommand{\wt}[1]{\widetilde{#1}}
\newcommand{\abs}[1]{\left| #1\right|}
\newcommand{\mc}[1]{\mathcal{#1}}
\newcommand{\pez}[1]{\left( #1\right)}
\def\blfootnote{\xdef\@thefnmark{}\@footnotetext}
\title[Rigidity of Anosov automorphisms with Jordan Blocks]{Periodic data rigidity of Anosov automorphisms with Jordan blocks}
\author{Jonathan DeWitt}
\address{Department of Mathematics, The University of Maryland, Maryland, MD 20742, USA}
\email{dewitt@umd.edu}
\date{\today}
\begin{document}

\begin{abstract}
Anosov automorphisms with Jordan blocks are not periodic data rigid. We introduce a refinement of the periodic data and show that this refined periodic data characterizes $C^{1+}$ conjugacy for Anosov automorphisms on $\mathbb{T}^4$ with a Jordan block.
\end{abstract}

\maketitle

\section{Introduction}

An Anosov diffeomorphism of a Riemannian manifold $M$ is a diffeomorphism $F\colon M\to M$ such that $TM$ continuously splits into two $DF$-invariant bundles $E^u$ and $E^s$ such that vectors in $E^u$ are uniformly expanded by $DF$ and vectors in $E^s$ are uniformly contracted by $DF$. If $L\in \SL(n,\Z)$ does not have any eigenvalues of unit modulus, then the map induced by $L$ on $\mathbb{T}^n$ is an Anosov diffeomorphism, which is called an \emph{Anosov automorphism} due to its algebraic construction. Anosov diffeomorphisms of tori exhibit strong topological rigidity. If $F\colon \mathbb{T}^n\to \mathbb{T}^n$ is Anosov and in the homotopy class of $L\in \SL(n,\Z)$, then by work of Franks \cite{franks1969anosov} and Manning \cite{manning1974there}, there exists a homeomorphism $h$ such that $h^{-1} F h=L$. The map $h$ is called a \emph{conjugacy} and is H\"older continuous. In this paper, we study ridigity of  Anosov automorphisms defined by a matrix $L\in \SL(4,\Z)$ that have a Jordan block.

This paper is the first to show rigidity of an Anosov diffeomorphism with a Jordan block. There are two main contributions of this paper. The first is the identification of the correct refinement of the periodic data for this setting and showing that this data exists and is well-defined. The second main contribution is a new approach to studying the regularity of conjugacies that does not rely on either conformality or an abundance of dynamically invariant foliations. In fact, in this paper, we contend with Anosov automorphisms that restricted to their unstable manifold preserve only a single $1$-dimensional foliation and whose differential has polynomial growth of conformal distortion. The development of techniques in this setting is useful in other contexts because many systems that we expect to be rigid do not admit many invariant foliations.

In this paper, we study a rigidity property of conjugacies between Anosov diffeomorphisms. Specifically, we will investigate conditions that imply that a conjugacy $h$ is $C^1$. One well known obstruction to the existence of a $C^1$ conjugacy is the periodic data, which we now describe. Suppose that $f$ and $g$ are two diffeomorphisms that are $C^1$ conjugate by a conjugacy $h$. If $p$ is a periodic point of period $n$, then $h(p)$ is a periodic point of $g$ of period $n$. By the chain rule, we must have that
\[
D_pf^n=D_{h(p)}hD_{h(p)}g^n(D_{h(p)}h)^{-1}.
\]
Thus the return maps of $Df$ and $Dg$ at corresponding periodic points are conjugate. Given two diffeomorphisms $f$ and $g$ with a conjugacy $h$ between them, we say that $f$ and $g$ have the same \emph{periodic data with respect to $h$} if for each periodic point $p$ of period $n$, we have that $D_pf^n$ and $D_{h(p)}g^n$ are conjugate as linear maps. In some situations, if $f$ and $g$ have the same periodic data then $h$ is $C^1$.

We say that a diffeomorphism is $C^{k+}$ when it is $C^k$ and its $k$th derivative is $\alpha$-H\"older continuous for some $\alpha>0$. We write $\Diff^{k+}(M)$ for the group of diffeomorphisms of $M$ that are $C^{k+}$. This leads us to the definition of rigidity we are interested in here.
\begin{defn}
We say that an Anosov diffeomorphism $f\colon M\to M$ is \emph{periodic data rigid} if for any Anosov diffeomorphism $g\in \Diff^{2+}(M)$ in the homotopy class of $f$, if $f$ and $g$ have the same periodic data with respect to a conjugacy $h$, then $h$ is $C^{1+}$.
\end{defn}
\noindent The main examples of diffeomorphisms $f$ exhibiting this type of rigidity are Anosov automorphisms. 

Periodic data rigidity is well studied. Early work was done by De la Llave, Marco, and Moriyon. See, for example, \cite{de1992smooth}, \cite{marco1987invariantsI}, and \cite{de1987invariants}. More recently, a paper by Gogolev, Kalinin and Sadovskaya \cite{gogolev2020local}, showed local periodic data rigidity of an Anosov automorphism of $\mathbb{T}^n$, under the assumption that no three eigenvalues of $L$ have the same modulus and that $L$ and $L^4$ are both irreducible. For earlier periodic data rigidity results implied by the work of \cite{gogolev2020local}, see for example \cite{gogolev2008smooth} and \cite{gogolev2011local}. Recently, Saghin and Yang obtained some additional results on the torus \cite{saghin2019lyapunov}. 
The previously mentioned results apply to Anosov automorphisms of tori. The author recently also obtained periodic data rigidity results for Anosov automorphisms of nilmanifolds \cite{dewitt2021local}. Recently Gogolev and Rodriguez Hertz \cite{gogolev2022smooth}, have proved additional periodic data rigidity results relying on a novel condition called \emph{very non-algebraicity} using a technique of matching functions that they introduced. They also extended these results to codimension one Anosov diffeomorphisms in \cite{gogolev2021smooth}.

In this paper we consider Anosov automorphisms of $\mathbb{T}^4$ with a Jordan block, such as the following:
\begin{equation}
A=\begin{bmatrix}
2 & 1 & 1 & 0\\
1 & 1 & 0 & 1\\
0 & 0 & 2 & 1\\
0 & 0 & 1 & 1
\end{bmatrix}.
\end{equation}
Such automorphisms with Jordan blocks are known to not be periodic data rigid due to examples of de la Llave in \cite[Sec.~6]{delallave2002rigidity}, and do not seem to have been studied after the construction of those counterexamples. In fact, any Anosov automorphism of $\mathbb{T}^4$ that has a Jordan block in its periodic data is not periodic data rigid. In this paper, we nonetheless recover a periodic data rigidity result for such automorphisms by introducing a refinement of the periodic data, which we call the Jordan periodic data. 

We now describe what the Jordan periodic data is. Fix some $\lambda>0$. If we have a cocycle taking values in the set of matrices
\[
U_{\lambda}=\left\{\begin{bmatrix}
\lambda & a\\
0 & \lambda
\end{bmatrix}\mid a\in \R\right\},
\]
then knowing that two such cocycles have the same periodic data may not be enough to determine that they are cohomologous as $\GL(2,\R)$ cocycles. In fact, if we have two continuous cocycles $\mc{A}$ and $\mc{B}$ over a map $\sigma\colon \Sigma\to \Sigma$ taking values in  $U_{\lambda}$, and all of the periodic data of $\mc{A}$ and $\mc{B}$ has a Jordan block, then their periodic data is identical because all such Jordan blocks are conjugate. 
If we write these cocycles as 
\[
\begin{bmatrix}
\lambda & \alpha_A\\
0 &\lambda
\end{bmatrix}\text{ and }
\begin{bmatrix}
\lambda & \alpha_B\\
0 & \lambda
\end{bmatrix},
\]
then these two cocycles are continuously cohomologous as $\GL(2,\R)$ cocycles precisely when there exists a constant $C$ and a continuous map $\psi\colon \Sigma\to \R$ such that 
\[
\alpha_A=C\alpha_B+\psi\circ \sigma-\psi.
\]
We write $[\alpha_A]$ for the class of all functions such as $\alpha_B$ that $\alpha_A$ is ``projectively" cohomologous to. 
This ``projective cohomology class" is itself determined by the projective class of $[\alpha_A]$'s periodic data. 
Hence there is no loss of thinking of $[\alpha_A]$ as periodic data.  We refer to $[\alpha_A]$ as the \emph{Jordan periodic data} associated to $\mc{A}$.

Not all cocycles are immediately recognizable as taking values in $U_{\lambda}$. We say that a cocycle is \emph{$U_{\lambda}$-framed} if it may be continuously conjugated into $U_{\lambda}$. If a cocycle is $U_{\lambda}$-framed and has a Jordan block, then its Jordan periodic data may be defined, and indeed is well defined independent of the conjugacy into $U_{\lambda}$.

Our main result has two parts. The first part shows that if an Anosov diffeomorphism has the same periodic data as an Anosov automorphism with a Jordan block, then it has well defined Jordan periodic data.

\begin{prop}\label{prop:framings_exist}
Let $L$ is an Anosov automorphism of $\mathbb{T}^4$ with a Jordan block and eigenvalues $\lambda, \lambda^{-1}$. Suppose that $F$ is a $C^2$ Anosov diffeomorphism with the same periodic data as $L$. Then the stable and unstable bundles of $F$ are trivial and $U_{\lambda}$ and $U_{\lambda^{-1}}$-framed, respectively.
\end{prop}

In the case that the stable and unstable bundles admit such framings, one obtains $[\alpha_u^F]$ and $[\alpha_s^F]$ as before; we refer to these as the unstable and stable Jordan periodic data, respectively. We refer to the pair $[\alpha_u^F]$ and $[\alpha_s^F]$ as the \emph{full} Jordan periodic data.

The second part of the main result shows that if an Anosov diffeomorphism has the same Jordan periodic data as an Anosov automorphism, then the two are $C^{1+}$-conjugate.

\begin{prop}\label{prop:framed_implies_smooth}
Suppose that $L$ is an Anosov automorphism of $\mathbb{T}^4$ with a Jordan block and that $F$ is a $C^{2+}$ Anosov diffeomorphism such that
\begin{enumerate}
\item $F$ has the same periodic data as $L$ with respect to a conjugacy $h$;
\item The stable and unstable bundles of $F$ are trivial and admit $U_{\lambda^{-1}}$ and $U_{\lambda}$-framings, respectively;
\item
$F$ has the same full Jordan periodic data as $L$;
\end{enumerate}
Then $h$ is a $C^{1+}$ conjugacy between $F$ and $L$.
\end{prop}
These immediately combine to prove the following theorem:

\begin{thm}\label{thm:main_thm}
If $L\in \SL(4,\Z)$ defines an Anosov automorphism of $\mathbb{T}^4$ with a Jordan block and $F$ is any $C^{2+}$ Anosov diffeomorphism of $\mathbb{T}^4$ in the homotopy class of $L$ with the same periodic data as $L$, then the two have well defined Jordan periodic data and are $C^{1+}$ conjugate if and only if their full Jordan periodic data coincides.
\end{thm}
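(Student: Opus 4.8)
The plan is to derive Theorem~\ref{thm:main_thm} as an immediate corollary of Proposition~\ref{prop:framings_exist} and Proposition~\ref{prop:framed_implies_smooth}, so the work is entirely bookkeeping about how the two propositions fit together. First I would fix $L\in\SL(4,\Z)$ defining an Anosov automorphism of $\mathbb{T}^4$ with a Jordan block; such an $L$ necessarily has two pairs of eigenvalues, and the Jordan block forces them to be a single real eigenvalue $\lambda$ with $|\lambda|>1$ repeated (with a nontrivial block) on the unstable side and $\lambda^{-1}$ repeated on the stable side. I would record this normalization explicitly, since both propositions are phrased in terms of eigenvalues $\lambda,\lambda^{-1}$. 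Then, given a $C^{2+}$ Anosov diffeomorphism $F$ in the homotopy class of $L$ with the same periodic data as $L$, the Franks--Manning theorem provides a H\"older conjugacy $h$ with $h^{-1}Fh = L$, and "same periodic data with respect to $h$" is exactly the hypothesis needed to invoke Proposition~\ref{prop:framings_exist}.

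Next I would apply Proposition~\ref{prop:framings_exist} to conclude that the stable and unstable bundles of $F$ are trivial and $U_{\lambda^{-1}}$- and $U_{\lambda}$-framed respectively, and that the periodic data of each of these cocycles consists of Jordan blocks (this is what makes the framings genuine, in the sense discussed before the proposition). Consequently the unstable and stable Jordan periodic data $[\alpha_u^F]$ and $[\alpha_s^F]$ are defined, and by the discussion preceding the statement of Proposition~\ref{prop:framings_exist} they are well defined independent of the choice of framing. This establishes the first assertion of the theorem: $F$ has well defined (full) Jordan periodic data. The same of course applies to $L$ itself, viewed as an Anosov diffeomorphism with the same periodic data as $L$, so the phrase "their full Jordan periodic data coincides" is meaningful.

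For the equivalence, the reverse implication is the content of Proposition~\ref{prop:framed_implies_smooth}: if the full Jordan periodic data of $F$ and $L$ agree, then hypotheses (1)--(3) of that proposition hold --- (1) is assumed, (2) was just supplied by Proposition~\ref{prop:framings_exist}, and (3) is the hypothesis on the Jordan data --- so $h$ is $C^{1+}$. For the forward implication, I would argue as in the discussion of the periodic data at the start of the paper: if $h$ is $C^{1+}$ then the chain rule identity $D_pF^n = D_{h(p)}h\, D_{h(p)}L^n (D_{h(p)}h)^{-1}$ shows that the derivative cocycles of $F$ and $L$ are $C^{1+}$ (in particular continuously) cohomologous; restricting to the unstable bundle and expressing both cocycles in their $U_{\lambda}$-framings, the conjugating map becomes a continuous coboundary of the form $\psi\circ\sigma-\psi$ after the inevitable rescaling by a constant $C$, which is precisely the equivalence relation defining $[\alpha_u^F]=[\alpha_u^L]$; the stable side is identical. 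Hence the full Jordan periodic data coincide.

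The only genuinely delicate point in assembling the theorem is making sure the "projective" rescaling constant $C$ and the identification of bundles under $h$ are handled consistently on the stable and unstable sides; but all of this is already built into the definition of Jordan periodic data and into the statements of the two propositions, so the proof of the theorem itself is short. The substantive mathematics --- existence and well-definedness of the framings, and the regularity argument --- lives in Propositions~\ref{prop:framings_exist} and~\ref{prop:framed_implies_smooth}, which is where I would expect the real obstacles (controlling the polynomial growth of conformal distortion with only a single invariant one-dimensional foliation available) to be overcome.
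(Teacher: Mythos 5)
Your proposal is correct and follows exactly the paper's route: Theorem~\ref{thm:main_thm} is stated as an immediate consequence of Propositions~\ref{prop:framings_exist} and~\ref{prop:framed_implies_smooth}, with Proposition~\ref{prop:framings_exist} supplying well-definedness of the Jordan periodic data and Proposition~\ref{prop:framed_implies_smooth} giving the nontrivial ``if'' direction. You also spell out the routine ``only if'' direction (that a $C^{1+}$ conjugacy forces the derivative cocycles on $E^u$ and $E^s$ to be continuously conjugate, hence the projective classes to agree by Lemma~\ref{lem:jordan_data_cohomology}), which the paper leaves implicit; this is fine apart from a small bookkeeping slip in the indices of your chain-rule identity.
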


In fact, by using a recent result of Kalinin, Sadovskaya, and Wang \cite[Thm.~1.3]{kalinin2022local}, one can deduce local $C^{\infty}$ regularity of the conjugacy if we assume that $F$ is $C^{\infty}$ and a sufficiently high regularity perturbation of $L$.

The Anosov automorphisms to which Theorem \ref{thm:main_thm} applies may be described quite succinctly. The proof of the following appears in the appendix.

\begin{prop}\label{prop:integer_conjugacy_classes}
Suppose that $L\in \SL(4,\Z)$ is a matrix with a Jordan block and no eigenvalues of modulus $1$. Then $L$ is conjugate to a block matrix of the form
\[
\begin{bmatrix}
A & C\\
0 & B
\end{bmatrix},
\]
where $A,B\in \GL(2,\Z)$, $C\neq 0$, and the characteristic polynomials of $A$ and $B$ are equal and have two distinct real roots.
\end{prop}

There are some other results that one can potentially prove by combining the techniques in this paper with other work. Here are a few:
\begin{enumerate}
\item
One could prove local results for Anosov automorphisms with more than one unstable Lyapunov exponent by combining the techniques here with those in \cite{gogolev2011local} and \cite{dewitt2021local}.
\item
It seems quite likely that the techniques introduced in this paper allow one to prove analogous theorems for Anosov automorphisms with large dimensional Jordan blocks. 

\end{enumerate}

The author does not know the answer to the following question, which seems like it requires some additional techniques.

\begin{question}
Suppose that $f$ and $g$ are Anosov diffeomorphisms with the same periodic data as a linear Anosov diffeomorphism of $\mathbb{T}^4$ that has a Jordan block. If the full Jordan data of $f$ and $g$ coincide, then are $f$ and $g$ $C^1$ conjugate?
\end{question}

\noindent\textbf{Acknowledgements:} The author thanks Daniel Mitsutani for useful comments on this paper. The author is also grateful to Aaron Brown, Andrey Gogolev, Boris Hasselblatt, and Amie Wilkinson for helpful discussions.

\subsection{Outline of proof}
The proof follows the following outline.
\begin{enumerate}
\item
We introduce the ``Jordan" periodic data.
\item
We do some preliminary work to produce a $DF$-invariant flag $\mc{E}$ on which the Jordan periodic data is well defined.
\item
Show the existence of ``slow" foliations within each unstable leaf tangent to $\mc{E}$ and their unique integrability. 
\item
Show that the conjugacy $h$ intertwines these slow foliations and is uniformly $C^{1+}$ along them.
\item
Show that the ``slow" foliation is $C^{1+}$ and that its holonomies are uniformly regular over long distances.
\item
Using the de la Llave argument \cite{delallave2002rigidity}, we construct a $C^1$ model of the conjugacy, $h_0$, such that $F^{n}\circ h_0\circ L^{-n}\to h$ uniformly. By studying the derivatives $F^nh_0L^{-n}$, we obtain that $h$ is Lipschitz.
\item
Once we know that $h$ is suitably Lipschitz, we can differentiate it and use cocycle rigidity to conclude that its derivative is continuous and hence $h$ is differentiable.
\end{enumerate}

\section{Preliminaries}\label{sec:anosov_rigidity}

For definitions concerning foliations, we refer the reader to \cite{pugh1997holder}, which contains a thorough discussion of the topic.  We use the same terminology as that paper, which is standard. We now set some notation that will be of particular use in our argument. For a foliation $\mc{F}$, we write $\mc{F}(x)$ for the leaf containing the point $x$. As an extension of this notation, if $S$ is a set, then we write $\mc{F}(S)$ for $\bigcup_{x\in S} \mc{F}(x)$. If $\mc{F}$ is a foliation with $C^1$ leaves that foliates a Riemannian manifold $M$, if $x\in \mc{F}(y)$ then we denote by $d_{\mc{F}}(x,y)$ the distance between $x$ and $y$ as measured along the immersed submanifold $\mc{F}$, where $\mc{F}$ is endowed with the pullback Riemannian metric obtained from its inclusion into $M$.  If $h\colon M\to M$ is a map, then we say that $h$ intertwines the foliations $\mc{F}$ and $\mc{G}$ if $h(\mc{F}(x))=\mc{G}(h(x))$ for all points $x\in M$.

We say that a map $f$ is $C^{1+}$ if there exists $0<\alpha<1$ such that the derivative of $f$ is $C^{\alpha}$. When we say that a map of a noncompact space is uniformly $C^{\alpha}$, we mean that there exist $\alpha,\delta>0$ such that restricted to balls of radius $\delta$, the map is $C^{\alpha}$ H\"older with uniform constant.

In the following argument, we will often work with uniform transversals to a $C^1$ foliation $\mc{F}\subset M$. What we mean by this is that we are considering transversals $\gamma$ such that $\gamma$ is a uniformly $C^1$ map from its domain, which is an interval, to $M$ and $\dot{\gamma}(x)$ is uniformly transverse to $T_{\gamma(x)}\mc{F}$.

If $W$ is a transversal to a foliation $\mc{F}$ and each leaf of $\mc{F}$ intersects $W$ at most once, then we write $\Pi_{W}$ for the projection onto $W$ along the leaves of $\mc{F}$.  Specifically, if $z$ is a point such that $z\in \mc{F}(x)$ for some $x\in W$, then $\Pi_W(z)=x$. We say that transversals $T$ and $W$ are holonomy related if $W\subset \mc{F}(T)$ and vice versa.

For an Anosov diffeomorphism $F$, we denote by $\mc{W}^{u,F}$ the unstable foliation of $F$. This foliation has uniformly $C^{2+}$ leaves when $F$ is $C^{2+}$. We denote by $E^{u,F}$ the unstable bundle of $F$; similarly we define the stable foliation $\mc{W}^{s,F}$ and the stable bundle $E^{s,F}$.

If $E$ is a $n$-dimensional vector bundle over a manifold $M$, then by a \emph{framing} of $E$ we mean a choice of basis $[e_1,\ldots,e_n]$ of $E_x$ for each $x$ that varies continuously with $x$. For an Anosov automorphism $L$ on $\mathbb{T}^4$ with a Jordan block, we have a translation invariant framing $[e_1^L,e_2^L]$ that presents the differential of $DL\vert_{E^{u,L}}$ as a Jordan block:
\[
\begin{bmatrix}
\lambda & 1\\
0 & \lambda
\end{bmatrix}.
\]

\section{Jordan Periodic Data}

Suppose that $A\colon \Sigma\to \GL(2,\R)$ is a continuous function defining a cocycle over transitive hyperbolic system $\sigma\colon \Sigma\to \Sigma$, such as a transitive Anosov diffeomorphism. Suppose that $A$ is continuously conjugate to a cocycle taking values in the group $U_{\lambda}\subset \GL(2,\R)$ given by
\[
U_{\lambda}=\left\{\begin{bmatrix}
\lambda & a\\
0 & \lambda
\end{bmatrix}\mid a\in \R\right\}.
\]
We say that such a cocycle is reducible to a cocycle taking values in $U_{\lambda}$, or is $U_{\lambda}$-framed.
  By assumption, the cocycles of interest in this paper have non-trivial Jordan blocks in their periodic data. 
If a cocycle has a Jordan block in its periodic data for every periodic point, then we say that this cocycle has \emph{Jordan-full} periodic data.
 We now consider the cohomology of such cocycles that are reducible to $U_{\lambda}$ and have Jordan-full periodic data. We may write such a cocycle as 
\begin{equation}
\begin{bmatrix}
\lambda & \alpha_A\\
0 & \lambda
\end{bmatrix},
\end{equation}
for $\alpha_A\colon \Sigma\to \R$ a function whose regularity is the same as the regularity of $A$. The function $\alpha_A$ characterizes $A$ up to continuous $\GL(2,\R)$-conjugacy: two $U_{\lambda}$-valued coycles $A$ and $B$ are continuously conjugate if and only if $\alpha_A$ and $\alpha_B$ lie in the same ``projective" cohomology class. The following follows from a much more detailed result of Sadovskaya \cite[Prop. 5.1]{sadovskaya2013cohomology}, which gives a detailed description of $\GL(2,\R)$ cocycles.

\begin{lem}\label{lem:jordan_data_cohomology}
Suppose that $\Sigma$ is a transitive hyperbolic system and that $A,B\colon \Sigma\to \GL(2,\R)$ are two H\"older continuous functions defining cocycles reducible to $U_{\lambda}$ that are Jordan-full. Then $A$ and $B$ are H\"older conjugate if and only if there exists a constant $C\neq 0$ and a H\"older continuous function $\phi\colon \Sigma\to \R$ such that 
\[
\alpha_A=C\alpha_B+\phi\circ \sigma-\phi.
\]
In fact, if $A$ and $B$ take values in $U_{\lambda}$, then any conjugacy between $A$ and $B$ takes values in upper triangular matrices.
\end{lem}

By the usual abelian Livsic theorem, $\alpha_A=C\alpha_B+\phi\circ \sigma-\phi$ if and only if there exists $C$ such that for each periodic point $p$ of period $k$, $\sum_{i=1}^k \alpha_A(f^ip)=C\sum_{i=1}^k \alpha_B(f^ip)$. If there is such a $C$, we say that the periodic data of $A$ and $B$ is in the same \emph{projective class}. Given this preliminary the following definition is well-defined. 

\begin{defn}
Suppose that $A\colon \Sigma\to \GL(2,\R)$ is a function defining a cocycle over a transitive hyperbolic system that is reducible to $U_{\lambda}$. Let $\Per(\Sigma)$ be the set of periodic points of $\Sigma$. As before, associated to $A$ is the function $\alpha_A\colon \Sigma\to \R$. We define the \emph{Jordan} periodic data of $A$ to be the projective class of the function $\Per(\Sigma)\to \R$ that sends a periodic point $p$ of period $k$ to the sum of $\alpha_A$ along its orbit:
\[
p\mapsto \sum_{i=1}^k \alpha_A(\sigma^i(p)).
\]
\end{defn}
Lemma \ref{lem:jordan_data_cohomology} shows that the Jordan periodic data characterizes up to conjugacy the cocycles for which it is defined.

The discussion above defines the Jordan periodic data for cocycles over a single fixed system. If we have two cocycles over different conjugate systems, then we may pull back the cocycle by the conjugacy. We then say that two cocycles have the same Jordan periodic data with respect to a conjugacy $h$ if the pulled back cocycle has the same Jordan periodic data as the original cocycle.

If $F$ is an Anosov diffeomorphism such that $E^{u,F}$ and $E^{s,F}$ are trivial bundles and admit $U_{\lambda}$ and $U_{\lambda^{-1}}$-framings, then we write $[\alpha_u^F]$ for the Jordan data of $DF\vert_{E^{u,F}}$ and $[\alpha_s^F]$ for the Jordan data of the stable bundle. We call the pair the \emph{full} Jordan data of $F$.
\section{Preliminary Reductions}

In this section we give some preliminary reductions that bring us closer to producing the $U_{\lambda}$-framings we need to define the periodic data.

\begin{lem}\label{lem:flag_exists}
Suppose $L$ is as in Proposition \ref{prop:integer_conjugacy_classes} and that $F$ is a $C^2$ Anosov diffeomorphism with the same periodic data as $L$. Then there exists a nontrivial H\"older continuous $DF$-invariant flag
\[
0\subset \mc{E}\subset E^{u,F}
\]
and H\"older continuous Riemannian metrics on $\mc{E}$ and $E^{u,F}/\mc{E}$ such that with respect to these metrics the differential of $F$ has norm exactly $\lambda$. The same holds for $E^{s,F}$.
\end{lem}

\begin{proof}
Suppose that $F$ is an Anosov diffeomorphism with the same periodic data as $L$. 
Because $F$ has the same periodic data as $L$, the top and bottom Lyapunov exponents of the measures supported on periodic orbits are equal. 
Thus we may apply the continuous amenable reduction \cite[Thm 3.9]{kalinin2013cocycles} to conclude that there exists a H\"older continuous $DF$ invariant flag
\[
0\subset \mc{E}\subset E^{u,F}.
\]
We now appeal to our knowledge of the periodic data for the claim about the metric. Note that because $\mc{E}$ is one dimensional that the map $N\colon \mathbb{T}^4 \to \R^{\times}$ given by $x\mapsto \|D_xF\vert_{\mc{E}}\|$ defines a cocycle over $F$. Further, note that the assumption on periodic data implies that all the periodic data of $N$ is a power of $\lambda$. Thus $N$ by the abelian Livsic theorem \cite[Thm. 19.2.1]{katok1997introduction}, $N$ is H\"older cohomologous to the constant cocycle $\lambda$, i.e. $N=\lambda(\phi\circ F)\phi^{-1}$ for some H\"older $\phi\colon \mathbb{T}^4\to \R^{\times}$. If we replace the reference metric $\|\cdot \|$ on $\mc{E}$ with the metric $\phi\|\cdot\|$, then for this new metric we have that the norm of $DF\vert_{\mc{E}}$ is exactly $\lambda$ at every point. 

Precisely the same considerations show that $E^{u,F}/\mc{E}$ admits a norm with the same property.
\end{proof}

\begin{rem}
The reason that the above lemma is phrased in terms of a norm instead of in terms of a cocycle conjugacy is that there is nothing telling us that the bundle $\mc{E}$ is orientable.
\end{rem}

\section{The ``slow" foliation}\label{subsec:slow_foliation}
Note that if we identify a $\mc{W}^{u,L}$ leaf with $\R^2$ by use of the framing $[e_1^L,e_2^L]$, that $\mc{W}^{u,L}$ has an invariant foliation tangent to the $e_1^L$ line field. We call this foliation $\mc{S}^L$. We think of this foliation as ``slow" because vectors tangent to it grow at only rate $O(\lambda^n)$ when we iterate $L$. The following proposition constructs an analogous ``slow" foliation $\mc{S}^F$ that subfoliates the unstable foliation $\mc{W}^{u,F}$. 

In this proof we will use the notion of a quasi-isometry. Consider a surjection of metric spaces $f\colon (M_1,d_1)\to (M_2,d_2)$. Then $f$ is an $(A,B)$-quasi-isometry if there exist constants $A\ge 1$ and $B\ge 0$ such that for any $x,y\in M_1$,
\[
\frac{1}{A}d_1(x,y)-B\le d_2(f(x),f(y))\le Ad_1(x,y)+B.
\]
Informally, one thinks of quasi-isometries as being ``Lipschitz at large scale."

\begin{prop}\label{prop:h_intertwines_slow_leaves}
Suppose that $F$ is a $C^{2+}$ Anosov diffeomorphism that has the same periodic data as $L\in \SL(4,\Z)$, which is hyperbolic and has a Jordan block, and that $h$ is a conjugacy between $F$ and $L$.
Restricted to a $\mc{W}^{u,F}$ leaf, the bundle spanned by $e_1^F$ is uniquely integrable and tangent to a foliation $\mc{S}^F$, which has uniformly $C^{1+}$ leaves. Further, $h$ intertwines $\mc{S}^F$ with the foliation $\mc{S}^L$.
\end{prop}

\begin{proof}
By \cite[Cor. 2.7]{dewitt2021local}, there exist uniform constants $A,B$ such that for all $x\in \mathbb{T}^4$, $h\vert_{\mc{W}^{u,L}(x)}\colon \mc{W}^{u,L}(x)\to \mc{W}^{u,F}(h(x))$ is an $(A,B)$-quasi-isometry.

Note that
\[
\begin{bmatrix}
\lambda & 1\\
0 & \lambda
\end{bmatrix}^n=
\lambda^n \begin{bmatrix}
1 & n/\lambda \\
0 & 1
\end{bmatrix}.
\]
This implies that if $x\in \mc{W}^{u,L}(y)$ and $x\notin \mc{S}^L(y)$, then there exists $C>0$ such that
\[
d_{\mc{W}^{u,L}}(L^n(x),L^n(y))\ge C\lambda^n n.
\]
As $h$ is a quasi-isometry, this implies that
\begin{equation}\label{eqn:nlambda_growth}
d_{\mc{W}^{u,F}}(F^n(h(x)),F^n(h(y)))=d_{\mc{W}^{u,F}}(h(L^n(x)),L^n(y))\ge A^{-1}C\lambda^n n-B.
\end{equation}
But if $q\in \mc{W}^{u,F}(r)$ and $q$ and $r$ are two points connected by a curve tangent to $\mc{E}$ contained within $\mc{W}^{u,F}(r)$, then
\begin{equation}\label{eqn:lambdan_growth}
d_{\mc{W}^{u,F}}(F^n(q),F^n(r))\le \lambda^n
\end{equation}
because $\|DF\vert_{\mc{E}}\|=\lambda$.
Thus if $q$ and $r$ are connected by such a curve, then $h^{-1}(q)$ and $h^{-1}(r)$ must lie in the same $\mc{S}^L$ leaf inside of $\mc{W}^{u,L}$ because otherwise equation \eqref{eqn:lambdan_growth} would contradict equation \eqref{eqn:nlambda_growth}. This implies that the curves inside an unstable leaf tangent to $\mc{E}$ uniquely integrate to a foliation. Further, this foliation has uniformly $C^{1+}$ leaves because the $\mc{E}$ distribution is uniformly H\"older by Proposition \ref{lem:flag_exists}.
\end{proof}

\section{Differentiability along the slow foliation and the Jordan periodic data}

The approach we follow for showing that $h$ is differentiable along $\mc{S}^L$ is not new and follows exactly the same argument as in \cite[Sec. 2.5]{dewitt2021local}. Consequently, we will just give an outline of the idea.
To establish differentiability of $h$ along $\mc{S}^L$,
we will use the following result, which is a $C^{1+}$ version of Saghin and Yang \cite[Thm. G]{saghin2019lyapunov}. All the definitions used in this section and a thorough discussion may be found in \cite[Sec. 2.5]{dewitt2021local}.

\begin{defn}
Let $\mc{F}$ be an expanding foliation for a $C^{1+}$ diffeomorphism $f$. An $f$-invariant measure $\mu$ is a \emph{Gibbs expanding state} along $\mc{F}$ if for any foliation chart of $\mc{F}$, the disintegration of $\mu$ along the plaques of the chart is equivalent to the Lebesgue measure on the plaque for $\mu$-almost every plaque.
\end{defn}

Using this definition, we may state the following.

\begin{lem}\cite[Lem. 13]{dewitt2021local}.
Let $M$ be a smooth closed manifold, and let $f,g\in \Diff^{1+}(M)$. Let $\mc{F}$ be a one-dimensional expanding foliation for $f$, and let $\mc{G}$ be an expanding foliation for $g$ such that $\mc{F}$ and $\mc{G}$ have uniformly $C^{1+}$ leaves. Let $\mu$ be a Gibbs expanding state of $f$ along $\mc{F}$. Suppose that $f$ and $g$ are topologically conjugate by a homeomorphism $h$ and that $h$ intertwines $\mc{F}$ and $\mc{G}$. Then the following two conditions are equivalent:
\begin{enumerate}
\item
$\nu\coloneqq h_*(\mu)$ is a Gibbs expanding state of $g$ along the foliation $\mc{G}$.
\item
$h$ restricted to each $\mc{F}$ leaf within the support of $\mu$ is uniformly $C^{1+}$.

\end{enumerate}
\end{lem}

The main thing that needs to be verified in order to apply the previous lemma is that the $\mc{S}^F$ foliation is absolutely continuous. This is immediate from the following lemma of Ledrappier because $\|DF\vert_{\mc{E}}\|=\lambda$ is constant.

\begin{lem}\cite{ledrappier1985metric}
Let $f$ be a $C^{1+}$ diffeomorphism and let $\mu$ be an $f$-invariant measure. Suppose that $\mc{F}$ is an expanding foliation for $f$. Suppose that $\xi$ is an increasing measurable partition subordinate to $\mc{F}$ and $\mu$. Then the conditional measures of $\mu$ are absolutely continuous on the leaves of $\mc{F}$ if and only if
\[
H_{\mu}(f^{-1}\xi\mid \xi)=\int \log\|Df\mid_{T\mc{F}}\|\,d\mu,
\]
where $H_{\mu}(f^{-1}\xi\mid \xi)$ is the conditional entropy of $f^{-1}\xi$ given $\xi$.
\end{lem}

The previous two lemmas then combine to show the following.

\begin{prop}\label{prop:smooth_on_slow_leaves}
The conjugacy $h$ in Proposition \ref{prop:h_intertwines_slow_leaves}
 is uniformly $C^{1+}$ on leaves of the $\mc{S}^L$ foliation.
\end{prop}

Using this differentiability we can show that:

\begin{prop}\label{prop:E_is_trivial}
The bundle $\mc{E}$ in Lemma \ref{lem:flag_exists} is trivial.
\end{prop}
\begin{proof}
Recall the linear frame $[e_1^L,e_2^L]$ of $E^{u,L}$. Because $h$ intertwines $\mc{S}^L$ and $\mc{S}^F$, and is $C^{1+}$ along these foliations, we see that $Dh(e_1^L)$ is a continuous framing of $\mc{E}$; hence $\mc{E}$ is trivial.
\end{proof}

We can now prove Proposition \ref{prop:framings_exist}, which says that if $F$ has the same periodic data as $L$, which has Jordan blocks, then the stable and unstable bundles of $F$ are $U_{\lambda}$ and $U_{\lambda^{-1}}$-framed, respectively.

\begin{proof}[Proof of Proposition \ref{prop:framings_exist}.]
By Proposition \ref{prop:E_is_trivial} the bundle $\mc{E}$ from
 Lemma \ref{lem:flag_exists} has a framing $\wt{e}_1^F$.
 The periodic data for $DF\vert_{\mc{E}}$ is exactly multiplication by $\lambda$. Hence by application of Livsic we can rescale the $\wt{e}_1^F$ framing to find a new framing $e_1^F$ with the property that
\[
D_xFe_1^F(x)=\lambda e_1^F(F(x)).
\]

We now show that $E^u$ and $E^u/\mc{E}$ are trivial. By Proposition \ref{prop:Anosov_unstable_is_orientable}, the bundle $E^u$ is orientable, hence $w_1(E^u)=0$. But $E^u=E^u/\mc{E}\oplus \mc{E}$, so $w_1(E^u)=w_1(E^u/\mc{E})\oplus \mc{E})$ by additivity of the first Stiefel-Whitney class. Thus we find that $w_1(E^u/\mc{E})=0$, hence this bundle is trivial. This also implies that $E^u$ is trivial as it is the sum of trivial bundles.

Thus we see that $E^{u,F}/\mc{E}$ also admits a non-vanishing section $\overline{e}_2^F$ because it is trivial. By the same Livisic argument as before, we can rescale $\overline{e}_2^F$ so that $DF$ acts on this section as multiplication by $\lambda$. Let $e_2^F$ be a H\"older continuous section of $E^u$ projecting to $\overline{e}_2^F$ along $\mc{E}$. Then with respect to the framing $[e_1^F,e_2^F]$, $DF\vert_{E^u}$ has the form described above. The same considerations apply to the stable bundle.
\end{proof}

\begin{defn}\label{defn:U_lambda_framed}
In the sequel, we will say that an Anosov diffeomorphism is $U_{\lambda}$-framed if the following hold:
\begin{enumerate}
\item
$F$ is in the same homotopy class as an Anosov automorphism $L\in \SL(4,\Z)$ with a Jordan block;
\item
The stable and unstable bundles of $F$ are trivial;
\item
$DF\vert_{E^{u,F}}$ and $DF\vert_{E^{s,F}}$ admit a $U_{\lambda}$- and $U_{\lambda^{-1}}$-framings, where $\lambda>1$ is an eigenvalue of $L$. We refer to the unstable framing as $[e_1^F,e_2^F]$.
\end{enumerate}
\end{defn}

\section{Regularity of the ``slow" foliation}

In order to upgrade the regularity of the ``slow" foliation, we will use the normal forms developed by Kalinin and Sadovskaya.
The following Theorem is a combination of Theorem 4.6, Corollary 4.8, and Remark 4.2 in \cite{kalinin2020nonstationary}.

\begin{thm}\label{thm:normal_forms}
(Non-stationary Linearization)
Let $f$ be a $C^r$, $r\in (1,\infty]$ diffeomorphism of a smooth manifold $X$, and let $\mc{W}$ be an $f$-invariant topological foliation of $X$ with uniformly $C^r$ leaves. Let $\chi>0$ and 
\[
0<\epsilon< \chi/(d+4).
\]
Suppose that the linear extension $Df\vert_{T\mc{W}}$ satisfies:
\begin{equation}\label{eqn:normal_forms_condition}
e^{\chi-\epsilon}\|v\|\le \|D_xf(v)\|\le e^{\chi+\epsilon}\|v\|.
\end{equation}
Then there exists a family $\{\mc{H}_x\}_{x\in X}$ of $C^r$ diffeomorphisms $\mc{H}_x\colon \mc{W}(x)\to T_x\mc{W}(x)$ satisfying $\mc{H}_x(x)=0$ and $D_0\mc{H}_x=\Id$ such that for each $x\in X$,
\begin{equation}\label{eqn:normal_forms_linear_dynamics}
D_xf\vert_{T\mc{W}(x)}=\mc{H}_{f(x)}\circ f\circ \mc{H}_x^{-1}\colon T_x\mc{W}\to T_{f(x)}\mc{W}(f(x))
\end{equation}

The maps $\mc{H}_x$ restricted to balls of uniform radius depend continuously on $x\in X$ in the $C^{\lfloor r \rfloor}$ topology and have derivative that is uniformly $r-\lfloor r\rfloor$ H\"older. Further, for each $y\in \mc{W}_x$, the map $\mc{H}_y\circ \mc{H}_x^{-1}\colon T_x\mc{W}\to T_y\mc{W}$ is affine.

\end{thm}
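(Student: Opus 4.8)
\emph{This is the non-stationary linearization theorem of Guysinsky--Katok and Kalinin--Sadovskaya, so the plan is simply to recall the standard construction of the coordinate changes $\mc{H}_x$ as limits of renormalized iterates.}

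First I would fix a continuous family of $C^r$ charts $\Phi_x\colon\mc{W}(x)\to T_x\mc{W}(x)$ with $\Phi_x(x)=0$ and $D_x\Phi_x=\Id$, depending continuously on $x$ in $C^r$ on balls of a uniform radius (obtained from the given uniformly $C^r$ foliation charts after an affine normalization; this is where uniform $C^r$-ness of the leaves enters). In these charts $f$ becomes a family $\tilde f_x\colon T_x\mc{W}\to T_{f(x)}\mc{W}$, defined near $0$, with $\tilde f_x(0)=0$ and $D_0\tilde f_x=A_x:=D_xf\vert_{T\mc{W}(x)}$, all uniformly $C^r$ together with their inverses on a uniform ball. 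Since $Df$ expands $\mc{W}$ by \eqref{eqn:normal_forms_condition}, the inverse $g:=f^{-1}$ contracts it, with $e^{-\chi-\epsilon}\|v\|\le\|D_xg(v)\|\le e^{-\chi+\epsilon}\|v\|$; writing $g^n_x$ for the charted $n$-fold composition of $g$ along the backward orbit of $x$ and $B^n_x:=D_xg^n\vert_{T\mc{W}(x)}$, we have $\|B^n_x\|\le e^{(-\chi+\epsilon)n}$ and $\|(B^n_x)^{-1}\|\le e^{(\chi+\epsilon)n}$.

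Then I would set $\mc{H}^{(n)}_x:=(B^n_x)^{-1}\circ g^n_x$, a $C^r$ map fixing $0$ with derivative $\Id$ there. Peeling off one backward step ($g^{n+1}_x=g_{g^{n}x}\circ g^n_x$, $B^{n+1}_x=B^1_{g^{n}x}\circ B^n_x$) gives the telescoping identity
\[
\mc{H}^{(n+1)}_x-\mc{H}^{(n)}_x=(B^n_x)^{-1}\circ\bigl[(B^1_{g^{n}x})^{-1}\circ g_{g^{n}x}-\Id\bigr]\circ g^n_x .
\]
Here $g^n_x$ maps a uniform ball into the ball of radius $O(e^{(-\chi+\epsilon)n})$ about $0$, and the bracketed nonlinear defect fixes $0$ with derivative $\Id$ and is $C^{1+\beta}$ with uniform constants for some $\beta>0$, $1+\beta\le r$, so its $C^0$ norm there is $O(e^{(1+\beta)(-\chi+\epsilon)n})$; together with $\|(B^n_x)^{-1}\|\le e^{(\chi+\epsilon)n}$ this gives
\[
\|\mc{H}^{(n+1)}_x-\mc{H}^{(n)}_x\|_{C^0}=O\!\bigl(e^{(2\epsilon-\beta(\chi-\epsilon))n}\bigr),
\]
which is summable when $\epsilon$ is small relative to $\chi$, so $\mc{H}_x:=\lim_n\mc{H}^{(n)}_x$ exists uniformly on a uniform ball. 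The conjugacy relation \eqref{eqn:normal_forms_linear_dynamics} I would get by passing to the limit in the finite-level identity $\mc{H}^{(n)}_{f(x)}\circ\tilde f_x=A_x\circ\mc{H}^{(n-1)}_x$, which follows from $g^n_{f(x)}\circ\tilde f_x=g^{n-1}_x$ and $B^n_{f(x)}=B^{n-1}_x\circ A_x^{-1}$.

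The remaining claims --- that each $\mc{H}_x$ is a genuine $C^r$ diffeomorphism, that the family varies continuously in $x$ in the $C^{\lfloor r\rfloor}$ topology with uniformly $(r-\lfloor r\rfloor)$-H\"older top derivative, and that $\mc{H}_y\circ\mc{H}_x^{-1}$ is affine for $y\in\mc{W}(x)$ --- I would obtain by running the same telescoping estimate on the derivatives $D^k\mc{H}^{(n)}_x$ for $1\le k\le\lfloor r\rfloor$; each such estimate yields an exponential rate of the shape $\bigl[(\chi+\epsilon)-k(\chi-\epsilon)+(\text{defect order})\bigr]n$, and one needs all of them (for $k$ up to the relevant order --- hence the leaf dimension $d$ in the hypothesis) to be negative, which is precisely what forces the quantitative pinching $\epsilon<\chi/(d+4)$. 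The affineness then drops out because, after linearization, $\mc{H}_y\circ\mc{H}_x^{-1}$ is a $C^r$ map intertwining the linear cocycle $A_\cdot$ with itself along the orbit of $x$, and under the \emph{narrow-band} (resonance-free) pinching no nonlinear Taylor term can survive such an intertwining; this is Remark 4.2 of \cite{kalinin2020nonstationary}. I expect the one genuinely delicate point to be exactly this last bookkeeping: verifying that the narrow-band condition with the stated $\epsilon$ dominates all the nonlinear terms simultaneously, across all the derivative orders involved. Everything else is routine manipulation of the charts.
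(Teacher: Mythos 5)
The paper does not prove this statement at all: it imports it verbatim, introducing it with the sentence ``The following Theorem is a combination of Theorem 4.6, Corollary 4.8, and Remark 4.2 in \cite{kalinin2020nonstationary}.'' So there is no in-paper proof to compare yours against; the correct move, from the paper's standpoint, is simply to cite Kalinin--Sadovskaya.

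That said, your sketch is a faithful outline of the construction in that reference. The renormalized-iterates scheme $\mc{H}^{(n)}_x=(B^n_x)^{-1}\circ g^n_x$ with $g=f^{-1}$, the telescoping identity, the $C^0$ estimate $\|\mc{H}^{(n+1)}_x-\mc{H}^{(n)}_x\|=O\bigl(e^{(2\epsilon-\beta(\chi-\epsilon))n}\bigr)$, the passage to the limit in the finite-level conjugacy relation, and the identification of the genuinely delicate point (running the telescoping argument simultaneously at all derivative orders $k\le\lfloor r\rfloor$, which is exactly where the leaf dimension $d$ and the bound $\epsilon<\chi/(d+4)$ enter, and then using resonance-freeness of the narrow-band cocycle to force $\mc{H}_y\circ\mc{H}_x^{-1}$ affine) are all correct and match the source. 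If the point of the exercise was to reconstruct the proof that the paper is leaning on, you have done so; if the point was to reproduce what the paper itself writes, the answer is that it writes nothing, and the appropriate response is to observe the theorem is quoted, not proved.
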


We now upgrade the regularity of the $\mc{S}^F$ foliation. The idea behind the subsequent proofs is to use that that the normal forms coordinates are uniformly bilipschitz on small balls. This allows us to compare the rate at which curves shrink in both the normal forms coordinates and in the ambient manifold.

\begin{prop}\label{prop:slow_is_C_2}
Suppose that $F$ is a $C^{2+}$ Anosov diffeomorphism that is $U_{\lambda}$-framed as in definition \ref{defn:U_lambda_framed}.
The $\mc{S}^F$ foliation is uniformly $C^{2+}$ when restricted to $\mc{W}^{u,F}$ leaves.
\end{prop}

\begin{rem}
The following proof uses the normal forms coordinates. One could also prove this by using that the cocycle holonomies restricted to leaves are $C^{1+}$ and show that the $e_1^F$ distribution is invariant under these holonomies by studying the map $F^n H_{f^{-n}xf^{-n}y}F^{-n}$, where $H_{xy}$ denotes the cocycle holonomy between two points $x$ and $y$ in the same unstable leaf.
\end{rem}

\begin{proof}
To see this note that if $\gamma\colon [0,1]\to M$ is a $C^1$ curve tangent to the $e_1^F$ distribution then 
\begin{equation}\label{eqn:points_separate_slow}
d_{\mc{W}^{u,F}}(F^{-n}\gamma(0),F^{-n}\gamma(1))\le C\lambda^{-n},
\end{equation}
because $DF^{-1}(e_1^F)=\lambda^{-1}e_1^F$.
 Note that for any $x\in M$ that the Riemannian distance on $\mc{W}^{u,F}_{loc}(x)$ and the Euclidean distance on $\mc{W}^{u,L}$ with respect to the normal forms chart on $\mc{W}^{u,F}$ are uniformly bilipschitz as the normal forms depend continuously on the basepoint in the $C^{1}$ topology.

Note that $T_x\mc{W}^{u,F}$ comes equipped with the framing $[e_1^F,e_2^F]$, which gives coordinates on it as a manifold and as a vector space. We will always use this framing to express differentials of maps as matrices. We also fix a metric on $T_x\mc{W}^{u,F}$ that makes $e_1^F$ and $e_2^F$ orthonormal.

Note that 
\[
\begin{bmatrix}
\lambda & 1 \\
0 & \lambda
\end{bmatrix}^{-n}=\lambda^{-n}
\begin{bmatrix}
1 & -n\lambda\\
0 & 1
\end{bmatrix}.
\]
Hence, by equation \eqref{eqn:normal_forms_linear_dynamics}, $F^{-n}$ when viewed as a map $T_x\mc{W}^{u,F}\to T_{F^{-n}(x)}\mc{W}^{u,F}$ in normal forms coordinates is
\begin{equation}\label{eqn:composition_of_charts_and_map}
\mc{H}_{F^{-n}(x)}F^{-n}\mc{H}_x^{-1}=
\lambda^{-n}\begin{bmatrix}
1 & -n\lambda\\
0 & 1
\end{bmatrix}.
\end{equation}
Suppose that $\gamma\colon [0,1]\to T_x\mc{W}^{u,F}$ is non-constant in the $e_2^F$ direction. Then it is immediate from equation \eqref{eqn:composition_of_charts_and_map} that there exists $C_1$ such that
\begin{equation}\label{eqn:curve_growth_est_1}
d_{T_{F^{-n}(x)}\mc{W}^{u,F}}(\mc{H}_{F^{-n}(x)}F^{-n}\mc{H}_x^{-1}(\gamma(0)),
\mc{H}_{F^{-n}(x)}F^{-n}\mc{H}_x^{-1}(\gamma(1)))\ge C_1n\lambda^{-n}.
\end{equation}

Consider now a curve $\eta$ in $\mc{W}^{u,f}$ that is tangent to the $e_1^F$ distribution. Then as in equation \eqref{eqn:points_separate_slow}, there exists $C_2$ such that 
\[
d_{\mc{W}^{u,F}(F^{-n}(x))}(F^{-n}(\eta(0)),F^{-n}(\eta(1)))\le C_2\lambda^{-n}.
\]
Note that under backwards iteration the curve $F^{-n}\circ \gamma$ lies in a uniformly small ball. There exists $C_3$ such that on such uniformly small balls, $\mc{H}_x$ is $C_3$-bilipschitz. Thus
\begin{equation}\label{eqn:curve_growth_est_2}
d_{T_{F^{-n}(x)}\mc{W}^{u,F}}(\mc{H}_{F^{-n}(x)}F^{-n}(\eta(0)),\mc{H}_{F^{-n}(x)}F^{-n}(\eta(1)))\le C_3C_2\lambda^{-n}.
\end{equation}
Thus $\mc{H}_x\eta$ must lie tangent to the linear $e_1^F$ foliation on $T_x\mc{W}^{u,f}$ because otherwise 
equation \eqref{eqn:curve_growth_est_1} would contradict equation \eqref{eqn:curve_growth_est_2}. 

This implies that the linear foliation of $T_x\mc{W}^{u,F}$ tangent to $e_1^F$ pushes forward by $\mc{H}_x$ to the $\mc{S}^F$ foliation. Thus the $\mc{S}^F$ foliation has the same regularity as $\mc{H}_x$, as it is the image under $\mc{H}_x$ of the $e_1^F$ foliation. As the maps $\mc{H}_x$ are uniformly locally $C^{2+}$, this implies that $\mc{S}^F$ is a uniformly $C^{2+}$ foliation subordinate to $\mc{W}^{u,F}$.
\end{proof}

Note that we may consider holonomies along the $\mc{S}^L$ foliation between distant uniform transversals $T$ and $W$. As the holonomies of the $\mc{S}^L$ foliation are isometries, this implies that the resulting map $T\to W$ is uniformly continuous independent of the distance between $T$ and $W$ along $\mc{S}^L$ leaves. We will now develop a sequence of lemmas and prove that the analogous property holds for $\mc{S}^F$.

For $\theta\in [0,\pi]$, we say that a transversal $W$ is $\theta$-transverse to $\mc{S}^F$, if at every point, we have that $W$ makes an angle of at least $\theta$ with $\mc{S}^F$. 
The following says that transversals to the $\mc{S}^F$ foliation travel between uniformly distant leaves in uniform time. In order to describe the distance between leaves, we use the notion of the Hausdorff distance between two sets in a metric space $(M,d)$. The \emph{Hausdorff distance} is defined as:
\[
d_H(X,Y)=\max\{\sup_{x\in X}d(x,Y), \sup_{y\in Y}d(y,X)\}.
\]
If $X$ and $Y$ are two sets which both lie inside a leaf $W$ of another foliation $\mc{W}$, then we write
\[
d_H^{\mc{W}}(X,Y)=\max\{\sup_{x\in X}d_{W}(x,Y), \sup_{y\in Y}d_{W}(y,X)\},
\] 
where $d_W$ is the distance along the leaf $W$ with $W$'s pullback metric.

We will prove the following by covering the foliation with tiny charts in which the claim is trivial.

\begin{lem}\label{lem:transversals_travel}
Suppose that $F$ is $U_{\lambda}$-framed. For any $D>0$, $\theta\in (0,\pi/2]$, there exist $C>0$, such that if $x\in \mc{W}^{u,F}(y)$ and $d_H^{\mc{W}^{u,F}}(\mc{S}^F(x),\mc{S}^F(y))<D$, then any $\theta$-uniform transversal from $x$ to $\mc{S}^{F}(y)$ has length at most $C$.
\end{lem}

Before we begin the proof, we introduce some definitions. As a reminder a local product neighborhood for a point in a foliation is a foliation chart on a neighborhood of the point. Let $B^n_r(0)$ denote the ball of $r$ radius in $\R^n$. For what follows, we will work with foliation charts whose domain is of the form $\phi\colon [-a,b]\times B^{n-1}_r(0)\subset \R^n\to M$; the reason for this is so that we can refer to the ``top" and ``bottom" plaques of the chart, by which we mean $\phi(\{-a\}\times B^{n-1}_r(0))$ and $\phi(\{b\}\times B^{n-1}_r(0))$. As the $\mc{S}^F$ foliation is transversely orientable, we require that this ordering of the end leaves agrees with the transverse orientation on the foliation.

As the $\mc{S}^L$ foliation is Euclidean, it makes perfect sense to speak of the vertical ``distance" between two leaves. This distance is precisely equal to the Hausdorff distance between two leaves, though we refer to it as vertical as we are thinking of it in this geometric way.

\begin{defn}
For $\theta\in (0,\pi/2],t_0>0,\epsilon>0$, a $(\theta,t_0,\epsilon)$-chart for a point $x$ is a local product neighborhood for the $\mc{S}^F$ foliation such $x$ lies in the middle of the bottom plaque of the neighborhood and any unit speed transversal beginning from $x$ that is $\theta$-transverse hits the top leaf of the foliation chart in at most $t_0$-time.  Further, we require that $h$ carries the top and bottom plaques of the chart to leaves of the $\mc{S}^L$ foliation at vertical distance at least $\epsilon$ from each other.
\end{defn}

\begin{proof}[Proof of Lemma \ref{lem:transversals_travel}.]

To begin, we show that for every $\theta\in (0,\pi]$ there exist $t_{\theta},\epsilon>0$ such that every point $x\in \mathbb{T}^4$ is contained in the bottom leaf of a $(\theta,t_{\theta},\epsilon)$-chart. To prove this we essentially use two claims, one about the uniform continuity of $h$ and the other about the uniform $C^1$-ness of $\mc{S}^F$.
\begin{enumerate}
\item
Note that because $h$ is uniformly continuous, that for any $\epsilon>0$ there exists $\epsilon_L>0$ such that if $P_1$ and $P_2$ are two bounded plaques of the $\mc{S}^F$ foliation in the same $\mc{W}^{u,F}$ leaf with $d(P_1,P_2)>\epsilon$ then $h(P_1),h(P_2)$ lie in leaves of the $\mc{S}^L$ foliation at vertical distance at least $\epsilon_L>0$ apart. 
\item
Because the $\mc{S}^F$ foliation is uniformly $C^1$, for every sufficiently small $\epsilon>0$, there exist $t_{\epsilon}>0$ and $\delta>0$ such that every point $x$ is contained in the middle of the bottom plaque of a product neighborhood where a $\theta$-transversal beginning from $x$ hits the top plaque in at most most $t_{\epsilon}$ time and, in addition, $d(P_1,P_2)>\delta$.
\end{enumerate}

By combining these two statements, we obtain that for any $\theta>0$, there exist $t_{\theta},\epsilon_{\theta}>0$ and a cover of the $\mc{S}^F$ foliation by $(\theta,t_\theta,\epsilon_\theta)$-charts. Further, note that as the $\mc{S}^F$ foliation is transversely orientable, we may choose these charts to agree with this transverse orientation.

We now show that uniform transversals uniformly travel ``vertically." Consider a $\theta$-transverse curve $\gamma$ to the $\mc{S}^F$ foliation passing through a point $x$.  Let $B$ be a $(\theta,t_{\theta},\epsilon_{\theta})$-neighborhood for $x$ and let $P_1,P_2$ be the top and bottom plaques of $B$. Note that the vertical distance between $h(P_1)$ and $h(P_2)$ is at least $\epsilon_\theta$. Thus we see that as $\gamma$ traverses $B$, the vertical height of $h(\gamma)$ increases by $\epsilon_\theta$. This implies that $h(\gamma(0))$ and $h(\gamma(nt_{\theta}))$ are vertical distance at least $n\epsilon_\theta$ apart.

If $\mc{S}^F(x)$ and $\mc{S}^F(y)$ are at Hausdorff distance at most $D$ apart, then because $h$ is a quasi-isometry, there exists $D'>0$ such that $d_H^{\mc{W}^{u,L}}(h(\mc{S}^F),h(\mc{S}^F))<D'$. Thus we see that a unit-speed uniform $\theta$-transversal starting at the $\mc{S}^F(x)$ leaf will reach the $\mc{S}^F(y)$ leaf in at most $\lceil D'\epsilon_\theta^{-1}t_{\theta} \rceil$ time. The claim follows.
\end{proof}

We may now prove the following.

\begin{lem}\label{lem:uniform_C_1_holonomies}
Suppose that $F$ is $U_{\lambda}$-framed, then the holonomies of the $\mc{S}^F$ foliation are uniformly $C^1$. 
That is, if $W$ and $T$ are two uniform $C^1$ holonomy related transversals, then the map $\Pi_{T,W}\colon T \to W$ has uniformly continuous Jacobian, which is bounded independent of the transversals. 
\end{lem}

The approach of the following proof is to use normal forms coordinates to linearize the $\mc{S}^F$ foliation and study how transversals to the $\mc{S}^F$ foliation are stretched by its holonomies. Because $\mc{S}^F$ is intertwined with the $\mc{S}^L$ foliation, which has isometric holonomies, the holonomies of $\mc{S}^F$ can only stretch curves a limited amount transverse to the $\mc{S}^F$ foliation. This observation then implies the result.
\begin{proof}

By Lemma \ref{lem:transversals_travel}, there exists $C_1>0$ such that if $T$ is a transversal to the $\mc{S}^F$ foliation of length less than $1$, and $W$ is a uniform transversal to the $\mc{S}^F$ foliation, then $\Pi_W(T)$ has length less that $C_1$. This is immediate because $\Pi_W(T)$ is itself a uniform transversal.

Let $\mc{H}_x\colon \mc{W}^{u,F}(x)\to T_x\mc{W}^{u,F}(x)$ be the normal forms coordinates as in Theorem \ref{thm:normal_forms}. On balls of uniform radius $r_0\gg C_1$, the normal forms coordinates are uniformly $D$-bilipschitz for some $D\ge 1$. By that theorem, we have in addition that if $x\in \mc{W}^{u,F}(y)$, then the map $\mc{H}_y\circ \mc{H}_x^{-1}\colon T_x\mc{W}^{u,F}\to T_y\mc{W}^{u,F}$ is affine. In fact, its differential, with respect to the $[e_1^F,e_2^F]$ framing, is of the form
\begin{equation}\label{eqn:differential_coordinate_change}
\begin{bmatrix}
a & b \\
0 & c
\end{bmatrix},
\end{equation}
because the normal forms intertwine the $\mc{S}^F$ foliation with the linear foliation of $T_x\mc{W}^{u,F}$ tangent to $e_1^F$ as we saw in the proof of Proposition \ref{prop:slow_is_C_2}.

Fix a small number $0<\epsilon\ll r_0$.
Suppose that $I$ is a uniform transversal to the $\mc{S}^F$ foliation through the point $x$ that has length at least $\epsilon$ but length less than $r_0$.

 Let $I_x=\mc{H}_x\circ I$. Then $I_x$ is a curve in $T_x\mc{W}^{u,F}$. Suppose that $d{I}_x/dt$ is $\alpha(t)e_1^F+\beta(t)e_2^F$. Because $I_x$ has length at least $D^{-1}\epsilon$ and is uniformly transverse to $\mc{S}^F$, there exists a uniform constant $C_2>0$ such that 
\begin{equation}\label{eqn:int_of_beta}
\int \beta(t)\,dt\ge C_2 D^{-1}\epsilon.
\end{equation}

Consider a point $y\in \mc{W}^{u,F}$ such that $y\in \mc{S}^F(I)$. If we view $I_x$ in the $T_y\mc{W}^{u,F}$ normal coordinates, then $I_x$ has differential $D\mc{H}_y\mc{H}_x^{-1}\circ I_x$, which by equation \eqref{eqn:differential_coordinate_change} has differential with $e_2^F$ component equal to
\[
c\beta(t)e_2^F.
\]
Let $J$ be a curve through $0$ in $T_y\mc{W}^{u,F}$ tangent to $e_2^F$. Note that $\mc{H}_y^{-1}J$ is a uniform transversal to the $\mc{S}^F$ foliation.
As a curve in $T_y\mc{W}^{u,F}$, $\Pi_{J}\mc{H}_y\mc{H}_x^{-1}I_x$ has tangent $c\beta(t)e_2^F$. Thus by equation \eqref{eqn:int_of_beta},
\[
\text{len}(\Pi_J\mc{H}_yI)\ge cC_2D^{-1}\epsilon.
\]
But by the uniform $D$-bilipschitzness of normal coordinates, this implies that 
\[
\text{len}(\Pi_{\mc{H}_yJ}I)\ge cC_2D^{-2}\epsilon.
\]
But by the first paragraph of this proof, the length of this curve is at most $C_1$. Thus
\[
C_1\ge cC_2D^{-2}\epsilon, 
\]
so
\[
c\le C_1C_2^{-1}D^2\epsilon^{-1}.
\]
Note that this estimate is independent of $x$ and $y$. This implies that in normal forms coordinates that the holonomies are uniformly $C^1$ because the entry $c$ in equation \eqref{eqn:differential_coordinate_change} is uniformly bounded. But this implies that the same result holds for the $\mc{S}^F$ holonomies because the normal forms coordinates are uniformly locally $C^{2+}$.
\end{proof}

The previous discussion shows that the distribution defined by the vector field $e_1^F$ is $C^{1+}$, however, this is not the same as the vector defining the distribution being $C^{1}$. We now upgrade the regularity by using regularity for solutions of Livsic equations following 
\cite[Thm 2.2]{nitica1998regularity}.

\begin{lem}
Suppose that $F$ is $U_{\lambda}$-framed. Then the vector field $e_1^F$ is uniformly $C^{1+}$.
\end{lem}
\begin{proof}
Let $\wt{e}_1^F$ be a $C^{1+}$ rescaling of $e_1^F$, which exists because $e_1^F$ is tangent to a $C^{1+}$ distribution by Lemma \ref{prop:slow_is_C_2}.

Let $[e_1^F]$ denote the subspace tangent to $e_1^F$. Then consider the $C^{1+}$ cocycle $A\colon \mathbb{T}^4\to \R$ arising from $DF\vert_{[e_1^F]}\colon \mathbb{T}^4\to \R$ that sends $x$ to the number $\eta(x)$ such that $DF\wt{e}_1^F=e^{\eta(x)}\wt{e}_1^F$.  Note that if $p$ is a periodic point of period $p$, then $\sum_{i=1}^n A(F^i(p))=n\ln \lambda$. 

We claim that in fact $A$ is cohomologous to constant via a transfer function $\phi\colon \mathbb{T}^4\to \R$ that is uniformly $C^{1+}$ along $\mc{W}^u$ leaves. This follows from the argument in \cite[Thm 2.2]{nitica1998regularity}, which we now sketch. If we view the function $A$ as defining a cocycle $\mc{A}$ on the space $\mathbb{T}^4\times \R$, then this cocycle is partially hyperbolic because $\R$ is abelian. Hence the unstable foliations lift to $\mathbb{T}^4\times \R$ and these are uniformly $C^{1+}$ foliations $\mc{W}^{u,\mc{A}}$. If $\mc{B}$ is another uniformly $C^{1+}$ $\R$-valued cocycle along $\mc{W}^{u,F}$ leaves arising from a function $B\colon \mathbb{T}^4\to \R$, then we similarly get uniformly $C^{1+}$ foliations $\mc{W}^{u,\mc{B}}$. One can then check that the transfer function carries $\mc{W}^{u,\mc{A}}$ leaves to $\mc{W}^{u,\mc{B}}$ leaves. As each of these foliations has uniformly $C^{1+}$ leaves, we see by the implicit function theorem that a transfer function must be uniformly $C^{1+}$ as well. 

Using the function $\phi$ to rescale $\wt{e}_1^F$, we may find a new distribution $\hat{e}_1^F$ that is uniformly $C^{1+}$ on $\mc{W}^{u,F}$ leaves and satisfies $DF\hat{e}_1^F=\lambda \hat{e}_1^F$. It suffices to now check that $e_1^F$ coincides with $\hat{e}_1^F$.  Note that $\hat{e}_1^F=\beta(x)e_1^F$ for some continuous function $\beta$. Then computing $DF\beta(x)e_1^F$ in two different ways we find:
\[
\lambda\beta(x)e_1^F(\sigma(x))=\lambda \beta(F(x))e_1^F,
\]
hence $\beta$ is constant on orbits and hence is constant. Thus in fact $\hat{e}_1^F=Ce_1^F$ and so $e_1^F$ was $C^{1+}$ all along.
\end{proof}

\begin{lem}\label{lem:F_intertwines_flow}
Suppose that $F$ is $U_{\lambda}$-framed and let 
 $\phi^t$ be the flow along the $e_1^F$ vector field, i.e. tangent to slow leaves. Then 
\begin{equation}
	\phi^t=F^{n}\circ \phi^{\lambda^{-n}t}\circ F^{-n}.
\end{equation}
\end{lem}
\begin{proof}
Both sides of the equality are flows of $C^1$ vector fields, so it suffices to check that they have the same generating field, i.e. the right hand side is generated by $e_1^F$.

	If we write $\phi^{t}$ in coordinates as $(x,y)\mapsto (x,y)+te_1^F+O(t^2)$, then we see that the composition takes
\begin{align*}
(x,y)&\mapsto F^{-n}(x,y)\\
&\mapsto F^{-n}(x,y)+t\lambda^{-n}e_1^F+O((\lambda^{-n}t)^{1+\alpha})\\
&\mapsto (x,y)+te_1^F+O(n\lambda^{-n\alpha}t^{1+\alpha})
\end{align*}
as desired.
\end{proof}

The following proof is the only place where we use that the Jordan periodic data of $F$ is exactly that of $L$. It seems that in general, if $G$ has Jordan periodic data $\alpha_G$ and $\int \alpha_G\,d\vol>0$, then the following proof can still be carried out. It is unknown to the author whether this property necessarily holds for the Jordan data of all Anosov diffeomorphisms with the periodic data of $L$.

\begin{lem}\label{lem:uniform_transversals_flow}
Let $F$ be $U_{\lambda}$-framed.
Let $\phi^t$ be the flow along $e_1^F$ and suppose that $F$ has the same Jordan data as $L$. Then the flow $\phi^t$ carries uniform $C^1$ transversals to $\mc{S}^F$ to  uniform $C^1$ transversals to $\mc{S}^F$.
\end{lem}

\begin{proof}
In order to do this we will study the differential of the flow $\phi^t$ in the $[e_1^F,e_2^F]$ framing. We will do so by first studying the differential of the flow $\phi^t$. We write all differentials below with respect to the $[e_1^F,e_2^F]$ framing.

Because $e_1^F$ is a $C^{1+}$ vector field, we have that the flow map $\phi^t(x)$ is $C^{1+}$ in time and initial conditions.\footnote{The author is unaware of a reference in the literature for this claim; however, one can show this by reading, for example, \cite{izzo1999} and keeping track of the H\"older constant during Picard iteration.} Thus in a coordinate chart we may write $D\phi^t=\Id+O(t^{\sigma})$ for the differential of the flow of time $t$ for some $0<\sigma<1$. We will only be interested in this differential for increasingly short times $\lambda^{-n}t$.

We now find an expression for $D\phi^t$ in terms of the $[e_1^F,e_2^F]$ framings. We can always choose our coordinates to be tangent to the vector fields $[e_1^F,v]$ where $v$ is some smooth transverse field to $e_1^F$. Then $v=\alpha e_1^F+\beta e_2^F$ and we can insist that both $\alpha$ and $\beta$ are $C^{\sigma}$-H\"older and that $\beta$ is uniformly bounded below by transversality. For ease of notation let $x$ be the initial point and $y=\phi^{\lambda^{-n}t}$.

 To change into the $[e_1^F,e_2^F]$ framing, we must conjugate $\Id+O(\lambda^{-n\sigma}t^{\sigma})$ as follows.
\begin{align*}
&\begin{bmatrix}
1 & -\alpha(y)\beta^{-1}(y)\\
0 & \beta^{-1}(y)
\end{bmatrix}
\left[\Id+O(\lambda^{-n\sigma}t^{\sigma})\right]
\begin{bmatrix}
1 & \alpha(x)\\
0 & \beta(x)
\end{bmatrix}\\
&=
\begin{bmatrix}
1 & -\alpha(y)\beta^{-1}(y)\\
0 & \beta^{-1}(y)
\end{bmatrix}
\begin{bmatrix}
1 & \alpha(x)\\
0 & \beta(x)
\end{bmatrix}
+O(\lambda^{-n\sigma}t^{\sigma})\\
&=\begin{bmatrix}
1 & \alpha(x)-\alpha(y)\beta^{-1}(y)\beta(x)\\
0 & \beta^{-1}(y)\beta(x).
\end{bmatrix}
+O(\lambda^{-n\sigma}t^{\sigma}).\\
\end{align*}
Note that $d(x,y)=O(\lambda^{-n}t)$. As $\beta$ is bounded below, this implies that \[\beta^{-1}(y)\beta(x)=1+O(\lambda^{-n\sigma}t^{\sigma}).\] Using a H\"older estimate on $\alpha$ and cancelling, gives that with respect to the $[e_1^F,e_2^F]$ framings,
\[
D\phi^{\lambda^{-n}t}=\Id+O(t^{\sigma}\lambda^{-n\sigma}),
\]
for some $0<\sigma<1$.

Hence with respect to the framings, using the relation $\phi^t=F^n\circ \phi^{\lambda^{-n}t}\circ F^{-n}$, we find
\[
D\phi^t=
\begin{bmatrix}
\lambda^n & n\lambda^{n-1}\\
0 & \lambda^n
\end{bmatrix}\pez{\Id +O(t^{\sigma}\lambda^{-\sigma n})}
\begin{bmatrix}
\lambda^{-n} & -n\lambda^{-n+1}\\
0 & \lambda^{-n}
\end{bmatrix}.
\]
But this is converging to $\Id$ as $n\to \infty$. Thus we see that for all times $t$, $D\phi^t$ is the map that sends $e_1^F\mapsto e_1^F$ and $e_2^F\mapsto e_2^F$ at corresponding base points. 

In particular, this implies the result because it shows that curves that are uniformly transverse to the flow direction $e_1^F$ remain uniformly transverse to $e_1^F$ and are not distorted in length as the fields $e_1^F$ and $e_2^F$ are uniformly bounded above and below in length.
\end{proof}

\begin{rem}
It is perhaps not surprising that the $e_2^F$ field is preserved by $\phi^t$ because the frames $[Ce_1^F,Ce_2^F]$ are the only frames that present the cocycle as a constant Jordan block.
\end{rem}

\subsection{The conjugacy is Lipschitz}

Now that we have established stronger properties of the $\mc{S}^F$ foliation, we will use these to show that $h$ is Lipschitz.

\begin{lem}\label{lem:lipschitz_conj}
Suppose that $F$ is $C^{2+}$ and $U_{\lambda}$-framed. Then $h$ is uniformly Lipschitz restricted to each unstable leaf.
\end{lem}

The proof of Lemma \ref{lem:lipschitz_conj} is based on exhibiting $h$ as the uniform limit of a sequence of uniformly $C^1$ functions. The main difficulty we encounter is that we cannot establish regularity of $h$ in a single step. Instead, we establish regularity of $h$ incrementally by studying its regularity in some directions before others. As an ansatz, suppose we knew that $h$ was differentiable with
\[
D_zh=\begin{bmatrix}
a(z)& b(z)\\
0 & c(z)
\end{bmatrix},
\]
with respect to the framings $[e_1^L,e_2^L]$ and $[e_1^F,e_2^F]$. It is easy to deduce that $a(z)=C$ for some number $C$ because $h$ intertwines the $\mc{S}^L$ and $\mc{S}^F$ foliations. It is straightforward to check that $c(z)=C$ as well by using that $b(z)$ is uniformly bounded. Below we study the ``differential" of $h$ in this manner. We first study the possible diagonal ``entries" of the derivative of $h$ and then study the ``upper right-corner" of $Dh$. This approach is substantially complicated because, as far as the author can tell, partial derivatives do not make sense invariantly for functions that are not $C^1$. However, we can still recover the information we need by studying a derivative normal to the foliations $\mc{S}^L$ and $\mc{S}^F$, which we introduce in Definition \ref{defn:normal_derivative}.

This section relies on the following lemma of de la Llave. The version we give below is a slight rephrasing of the original adapted to our setting. See \cite{gogolev2020local} for a recent example of the use of this lemma in a similar context.
\begin{lem}\cite[Thm. 2.1]{delallave2002rigidity}\label{lem:delallave}
Let $f,g$ be $C^1$ Anosov diffeomorphisms of a closed manifold $M$. Let $h$ be a homeomorphism of $M$ such that 
\begin{enumerate}
\item
$h\circ f=g\circ h$.
\end{enumerate}
Let $k$ be a map---not necessarily invertible or continuous---such that:
\begin{enumerate}
\item[(2)]
$k(\mc{W}^{u,f}_x)\subset \mc{W}^{u,g}_{h(x)}$, and
\item[(3)]
$\sup_x d_{\mc{W}^u}(k(x),h(x))<\infty.$
\end{enumerate}
Then
\[
h(x)=\lim_{n\to \infty} g^{-n}\circ k\circ f^{n}(x),
\]
and the limit is reached uniformly with respect to the distances $d_{\mc{W}^{u,f}}$ and $d_{\mc{W}^{u,g}}$.
\end{lem}
As the convergence in Lemma \ref{lem:delallave} is uniform, we can use it to study whether the conjugacy is Lipschitz by exhibiting the conjugacy as a uniform limit of uniformly Lipschitz functions.

In the sequel we will write $Dh(e_1^L)$ to mean the derivative of $h$ in the direction of $e_1^L$ along the foliation $\mc{S}^L$. By Proposition \ref{prop:smooth_on_slow_leaves}, this derivative exists.
Note that when we make use of this notation we are not asserting that $h$ is differentiable.

\begin{lem}\label{lem:partial_x_h_1}
There exists $C_1\neq 0$ such that $Dh{e_1^L}=C_1e_1^F$. Without loss of generality, we may assume that $C_1=1$.
\end{lem}
\begin{proof}
By Proposition \ref{prop:smooth_on_slow_leaves}, we know that this derivative exists and that 
\[
Dh(e_1^L)=c(x)e_1^F
\]
for some H\"older function $c(x)$.
Because
\[
F\circ h=h\circ L,
\]
we can differentiate this equation along $\mc{S}^L$ on both sides and use the chain rule to find that
\[
\lambda c(x)e_1^F=\lambda Dh(e_1^L)=Dh(e_1^L)\lambda=c(L(x))\lambda e_1^F.
\]
Thus $c(x)$ is constant on orbits. Hence as $c(x)$ is continuous it is constant. 
To ensure that $C_1=1$, we replace $[e_1^F,e_2^F]$ with $[C_1e_1^F,C_1e_2^F]$. Note that this does not change the Jordan periodic data or the function $\alpha$ representing the data in this framing.
\end{proof}

We now show our first smoothing lemma.

\begin{lem}\label{lem:first_smoothing}
Suppose that $F$ is $C^{2+}$ and $U_{\lambda}$-framed. 
There exists a function $h_0\colon M\to M$ such that, writing $h_x$ for the restriction $h_0\vert_{\mc{W}^{u,L}(x)}$, we have that:
\begin{enumerate}
\item
$h_0$ intertwines the $\mc{W}^{u,L}$ and $\mc{W}^{u,F}$ foliations as well as the $\mc{S}^L$ and $\mc{S}^F$ foliations;
\item
$h_x$ is $C^1$ and is uniformly $C^0$ close to $h$ with respect to $d_{\mc{W}^u}$;
\item
With respect to the framings $[e_1^L,e_2^L]$, $[e_1^F,e_2^F]$, the differential of $h_0$ is
\[
\begin{bmatrix}
1 & b(x)\\
0 & a(x)
\end{bmatrix},
\]
where $a$ is a uniformly bounded continuous function on each leaf and the uniformity is independent of the leaf.
\end{enumerate}
\end{lem}

Before beginning the proof we describe how one studies the regularity in a situation like this. 
If we want to show that a function $k\colon \mc{W}^{u,L}(x)\to \mc{W}^{u,F}(h(x))$ is uniformly $C^1$, then it suffices to check that this is true in a particular family of charts adapted to the $C^1$ norms on these manifolds.
While $\mc{W}^{u,L}(x)$ admits a chart $\psi^L\colon \R^2\to \mc{W}^{u,L}(x)$ that is an isometry, and hence uniformly $C^1$, the leaf $\mc{W}^{u,F}(h(x))$ does not a priori have such a nice parametrization.
However, $\mc{W}^{u,F}$ does admit a \emph{uniform} family of charts, which we now describe.
Let $\phi$ be the flow of the $C^{1+}$ vector field $e_1^F$ along the leaves of $\mc{S}^F$. Note that for any fixed $t_0>0$ the maps $\phi^t$ for $t\in [-t_0,t_0]$ are uniformly $C^{1+}$.
Fixing numbers $N, t_0>0$ and a parameter $\theta>0$, if $\gamma(t)\colon (-\delta,\delta)\to \mc{W}^{u,F}(h(x))$ is any unit speed curve of length between $N$ and $2N$ that makes an angle of at least $\theta$ with $e_1^F$ distribution, then the map $\psi_{\gamma}^F\colon (-\delta,\delta)\times (-t_0,t_0)\to \mc{W}^{u,F}(h(x))$ defined by
\[
(t_1,t_2)\mapsto \phi^{t_2}\gamma(t_1),
\]
is a parametrization of a subset of $\mc{W}^{u,F}$. In addition $\psi_{\gamma}^F$ is a $C^{1+}$ chart for the $\mc{S}^F$ foliation. 

In particular, given this uniform family of charts, we may study regularity in the following way.
 If we want to show that a map $k\colon \mc{W}^{u,L}(x)\to \mc{W}^{u,F}(h(x))$ is uniformly $C^1$, it suffices to show that for each uniform pair of charts $\psi_{\gamma}^F$ and $\psi^L$, we have that 
\begin{equation}\label{eqn:uniform_chart}
(\psi_{\gamma}^F)^{-1}\circ k\circ \psi^L,
\end{equation}
is uniformly $C^1$ as a map defined on a subset of $\R^2$. In the proof that follows, we will use this family of charts in order to study the regularity.

As mentioned before, because the leaves of the $\mc{W}^{u,L}$ foliation are Euclidean they admit global uniformly smooth charts. Specifically, on the $\mc{W}^{u,L}$ foliation we exclusively work with the global charts of the form 
\[
(t_1,t_2)\mapsto x+t_1e_1^L+t_2e_2^L.
\]
We construct these charts around a specific transversal to the $\mc{S}^L$ foliation of the form $T_0\colon t\mapsto x+te_2^L$ in a manner analogous to \eqref{eqn:uniform_chart}. We similarly denote such a chart by $\psi_{T_0}^L$. In addition, these transversals $t\mapsto x+te_2^L$ define a linear foliation, which we denote by $\mc{F}^L$.

\begin{proof}[Proof of Lemma \ref{lem:first_smoothing}.]

We give the construction for a single leaf $\mc{W}^{u,L}(x)$. The full result follows by repeating the construction on each leaf.

Fix a uniform $C^1$ transversal $T_F\colon \R\to \mc{W}^{u,F}(h(x))$ to the $\mc{S}^F$ foliation.  To see that such a transversal exists, first note that there exist uniformly H\"older topological transversals, such as $h(T_0)$, where $T_0$ is a leaf of $\mc{F}^L$. Locally, one may mollify this transversal to obtain a uniformly $C^1$ curve $T_F$ that stays within a bounded distance of $h(T_0)$ and is uniformly transverse to the $\mc{S}^F$ foliation. In particular, we use that $h(T_0)$ and $T_F$ are uniformly $C^0$ close as maps $\R\to \mc{W}^{u,F}(x)$.

Letting $T_0\colon \R\to \mc{W}^{u,L}$ be a transversal to $\mc{S}^L$ tangent to $e_2^L$, define $P\colon \R\to \R$ by $T_F^{-1}\circ \Pi_{T_F}\circ h\circ T_0$.
Note that $P$ is uniformly H\"older because it
is the composition of uniformly H\"older maps.

Along the $\mc{F}^L$ leaf $T_0$, we can mollify $P$ as follows to get a function $\wt{P}_{T_0}$. Fixing any smooth bump function $\sigma$ on $\R$, we define 
\[
\wt{P}_{T_0}(t)\coloneqq \int P(t-x)\sigma(x)\,dx.
\]
Note that $\wt{P}_{T_0}$ is $C^0$ close to $P$ and that $T_F\circ \wt{P}_{T_0}$ is uniformly $C^0$ close to $T_F\circ P_{T_F}=\Pi_{T_F}\circ h\circ T_0$. But as $T_F$ is $C^0$ close to $h\circ T_0$, this is uniformly $C^0$ close to $h\circ T_0$ and hence $T_F\circ \wt{P}_{T_0}$ is uniformly $C^0$ close to $h\circ T_0$ as maps $\R\to \mc{W}^{u,F}$.

We now define the map $h_x$.
Define $\hat{h}_2$ to be the map $T_F\circ \wt{P}_{T_0}$. 
As before, let $\phi$ be the flow along the $e_1^F$ vector field tangent to the $\mc{S}^F$ foliation. In the coordinates $(t_1,t_2)=x+t_1e_1^L+t_2e_2^L$ on $\mc{W}^{u,L}$ define
\begin{equation}\label{eqn:y_smoothed_defn}
h_x\colon (t_1,t_2)\mapsto \phi^{t_1}(\hat{h}_2(t_2)).
\end{equation}

We now check that $h_x$ has the required regularity properties. From the definition it is immediate that $h_x$ is $C^1$. Next, we claim that $h_x$ is $C^0$ close to $h$. From earlier, we have that $h(T_0(t))$ is uniformly close to $T_F\circ \wt{P}_{T_0}(t)$. We claim that independent of $s$
\[
\phi^s(T_F\circ \wt{P}_{T_0}(t))
\text{ is uniformly close to }
h(s,t)=\phi^s(h(T_0(t))
\]
This follows because the flow $\phi^s$ is a uniform quasi-isometry independent of the value of $s$: this follows because $h$ is a quasi-isometry and intertwines the flow $\phi^s$ with the linear flow along $\mc{S}^L$, which is isometric.

We now turn to the differentiability properties of $h_x$. From the definition of $h_x$ in equation \eqref{eqn:y_smoothed_defn}, $h_x$ is manifestly $C^1$ and $Dh_x(e_1^L)=e_1^F$. It just remains to check that when written with respect to the bases $[e_1^L,e_2^L]$ and $[e_1^F,e_2^F]$ that the bottom right corner of $Dh_x$ is uniformly bounded.

We begin by checking the corresponding claim for a pair of uniform charts. For a uniform transversal $\gamma$ to $\mc{S}^F$, we  have uniform charts $\psi_{\gamma}^F$ and $\psi^L$ and may write $h_x$ as $(h_1,h_2)$ in these charts.
We claim that in such uniform charts, the derivative of $h_x$ looks like 
\begin{equation}\label{eqn:differential_wrong_coords}
\begin{bmatrix}
1 & b\\
0 & a
\end{bmatrix},
\end{equation}
where $a$ is a uniformly bounded continuous function. 
 To see this note first that $h_2$ in these charts is equal to $\gamma^{-1}\circ \Pi_{\gamma}\circ T_F\circ \wt{P}_{T_0}$. By Lemma \ref{lem:uniform_C_1_holonomies}, the map $\Pi_{\gamma}$ is uniformly $C^1$ as a map $T_F\to \gamma$ as these are both uniform transversals. 
Thus as the remaining maps in the composition are uniformly $C^1$ so is the entire composition.
Hence $a$ is uniformly bounded.

Getting uniformity for the specific framing in the lemma requires only slightly more work.
The matrix \eqref{eqn:differential_wrong_coords} is written with respect to the chart coordinate frames, which are vector fields $[e_1^L,e_2^L]$ and $[e_1^F,\partial_y]$, where $\partial_y$ depends on the chart $\psi_{\gamma}^F$. 
The field $e_2^F$ that we are actually interested in is a linear combination of $e_1^F$ and the chart coordinate $\partial_y$, i.e. $e_2^F=ce_1^F+d\partial_y$ where $c$ and $d$ are uniformly H\"older and $d$ is uniformly bounded away from $0$ because in uniform charts the coordinate directions are uniformly transverse.
Thus written with respect to bases $[e_1^L,e_2^L]$ and $[e_1^F,e_2^F]$, $Dh_x$ has the form:
\[
\begin{bmatrix}
1 & *\\
0 & ad^{-1}
\end{bmatrix}.
\]
Thus the derivative is uniformly bounded as required.
\end{proof}

\subsection{Normal Differentiability}

For a function $A\colon \R^2\to \R^2$, one often studies differentiability of $A$ by studying the partial derivatives of $A$. However, the usual definition of a partial derivative is far from being ``invariantly" formulated. This is particularly true in situations where $F$ is not differentiable but still has partial derivatives. For this reason we introduce a notion of differentiability normal to a foliation.

We say that a foliation $\mc{F}$ has \emph{well-defined} holonomy, if for any two transversals $I$ and $J$ such that $I\subset \mc{F}(J)$, we have that for each $x\in I$ that $\abs{\mc{F}(x)\cap J}=1$. For example, the foliation of $\R^2$ by lines has well-defined holonomy; in particular note that $\mc{S}^L$ and $\mc{S}^F$ both have well defined holonomy when restricted to unstable leaves. A foliation of a closed manifold might not have well defined holonomy. 
When restricted to a coordinate neighborhood, all foliations have well defined holonomy. 
However, we will not work with such a local notion as it is not necessary in our case.

\begin{defn}\label{defn:normal_derivative}
Suppose that $\mc{F}$ and $\mc{G}$ are two $C^1$ foliations with well-defined holonomy. Suppose that $\phi$ is a continuous map intertwining the $\mc{F}$ and $\mc{G}$ foliations. We say that $\phi$ is \emph{normally $C^1$ differentiable} to $\mc{F}$ and $\mc{G}$ if for any pair $T$ and $W$ of smooth transversals to $\mc{F}$ and $\mc{G}$, respectively, with $\phi(T)\subset \mc{G}(W)$, the map $T\to W$ given by $\Pi_{W}\circ \phi$ is $C^1$. (Here, as before, $\Pi_W$ denotes the projection to $W$ along the leaves of $\mc{G}$.)

Note that because $\mc{F}$ and $\mc{G}$ are $C^1$ foliations, the holonomy between different transversals is $C^1$, and hence this notion of differentiability is well defined independent of the choice of transversals.

When we have fixed a pair of transversals $T$ and $W$, we will speak about differentiability of $\phi$ along the transversals $T$ and $W$ normal to the foliations $\mc{F}$ and $\mc{G}$. Further, because the holonomies of $\mc{F}$ and $\mc{G}$ are absolutely continuous with respect to the Lebesgue measure, it makes sense to speak of normal differentiability almost everywhere along a transversal. We may also speak of normal differentiability at a pair of points $x$ and $y\in \mc{G}(\phi(x))$.
\end{defn}

We will restrict the use of this definition to the case where the foliations have dimension $1$ and subfoliate a space of dimension $2$.
In fact, if we have differentiability along such foliations and $v$ and $w$ vectors transverse to $\mc{F}$ and $\mc{G}$ such that the foot of $w$ is the image of the foot of $v$, then there is a well defined number $D_{v,w}\phi$ that we call the normal derivative of $\phi$ between $v$ and $w$, which we define in the following manner. 
Let $T$ and $W$ be two transverals with $dT/dt=v$ and $dW/dt=w$ at corresponding points. We then define $D_{v,w}\phi$ by the equation
\[
D(\Pi_{W}\circ \phi\circ T)(\partial_t)=(D_{v,w}\phi) w.
\]

Having made this definition, we now list a number of claims describing the basic and hopefully intuitive properties of this construction for later use.

\begin{claim}
Suppose that $\phi\colon M_1\to M_2$ intertwines foliations $\mc{F}$ and $\mc{G}$ with well-defined holonomy. Suppose that $v\in T_xM_1\setminus T_x\mc{F}$ and $w\in T_{y}M_2\setminus T_{y}\mc{G}$ such that $D_{v,w}\phi$ exists. Then for any $v'\in TM_1\vert_{\mc{F}(x)}\setminus T\mc{F}(x)$ and $w'\in TM_2\vert_{\mc{G}(\phi(x))}\setminus T\mc{G}(x)$, the normal derivative $D_{v',w'}\phi$ exists.

\end{claim}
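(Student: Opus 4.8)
The claim asserts that the existence of the normal derivative $D_{v,w}\phi$ at one pair of transverse vectors $(v,w)$ (with $w$ based at $\phi(x)$, $v$ based at $x$) forces normal differentiability at every pair $(v',w')$ of transverse vectors based along the leaves $\mc{F}(x)$ and $\mc{G}(\phi(x))$. The strategy is to reduce any such $(v',w')$ to the given $(v,w)$ by composing with holonomy maps of the two foliations, which are $C^1$ because $\mc{F}$ and $\mc{G}$ are $C^1$ foliations.

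\textbf{Key steps.} First I would set up the transversals: pick $C^1$ transversals $T,W$ realizing $v,w$ at $x,\phi(x)$ and transversals $T',W'$ realizing $v',w'$ at the feet of $v',w'$, shrinking all of them if necessary so that holonomy is well defined between each relevant pair (this uses the well-defined holonomy hypothesis, together with the fact that the feet of $v'$ and $w'$ lie on $\mc{F}(x)$ and $\mc{G}(\phi(x))$ respectively, so they are connected to the feet of $v,w$ by holonomy). Second, I would write the normal projection map $\Pi_{W'}\circ\phi\circ T'$ as a composition: holonomy $T'\to T$ along $\mc{F}$, then $\Pi_{W}\circ\phi\circ T$ (which is $C^1$ by hypothesis, being exactly the map whose differentiability is $D_{v,w}\phi$), then holonomy $W\to W'$ along $\mc{G}$. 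Concretely, since $\phi$ intertwines the foliations, $\Pi_{W'}\circ\phi = (\text{holonomy } W\to W')\circ \Pi_W\circ\phi$ on the appropriate domain, and precomposing with $T'$ and inserting $T\circ T^{-1}$ gives the composition of three $C^1$ maps. Third, I would invoke the remark already recorded in the definition that holonomy between transversals of a $C^1$ foliation is $C^1$, so each of the three factors is $C^1$, hence so is the composite; this is precisely normal differentiability at $(v',w')$. (If one also wants the numerical relation, the chain rule gives $D_{v',w'}\phi$ as a product of the two holonomy Jacobians and $D_{v,w}\phi$, but the claim as stated only needs existence.)

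\textbf{Main obstacle.} There is no deep obstacle; the only real care needed is bookkeeping about domains — making sure that, after shrinking transversals, the holonomy maps $T'\to T$ and $W\to W'$ are genuinely defined and are mutually inverse up to the intertwining, and that the point $x$ (equivalently its leaf) stays in the common domain so that the composition identity $\Pi_{W'}\circ\phi\circ T' = (\text{hol}_{W\to W'})\circ(\Pi_W\circ\phi\circ T)\circ(\text{hol}_{T'\to T})$ holds on a neighborhood. This is exactly where the hypothesis that both foliations have well-defined holonomy (and that we have restricted attention to the $1$-dimensional-in-$2$-dimensional situation, where transversals are arcs) is used, and it is routine to verify.
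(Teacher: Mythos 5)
Your proposal matches the paper's argument: both express $\Pi_{W'}\circ\phi\circ T'$ as the given $C^1$ map $\Pi_W\circ\phi\circ T$ pre- and post-composed with holonomy maps of $\mc{F}$ and $\mc{G}$, and then invoke that holonomies of $C^1$ foliations are $C^1$ together with the chain rule. This is correct and is essentially the same proof as in the paper.
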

\begin{proof}
	Supposing such a pair exists, we can express the derivative defining $D_{v',w'}\phi$ by composing with smooth maps.
	By definition $D_{v,w}\phi$ existing means that if $T$ is a transversal tangent to $w$ at $0$, then
	\[
		\Pi_W\circ \phi\circ T	\colon \R\to W
	\]
	is differentiable at $0$. But this implies that if $T'$ is a transversal tangent to $v'$ at $t=0$ and $W'$ is tangent to $w'$, then
	\[
		\Pi_{W'}\Pi_W\circ \phi\circ T\circ T^{-1}\circ \Pi_T\circ T'=\Pi_{W'}\circ \phi\circ T'
	\]
	is also differentiable at $0$ by the chain rule as we have pre- and post-composed with smooth functions. But $D(\Pi_{W'}\circ \phi\circ T')$ defines $D_{v',w'}\phi$, which we see exists.
\end{proof}

\begin{claim}\label{claim:intertwine_same_deriv}
	Suppose that $\phi\colon M_1\to M_2$ intertwines foliations $\mc{F}$ and $\mc{G}$ with well-defined holonomy.  Suppose that $\psi$ is another map intertwining these foliations. Then if $\psi\colon M_1\to M_2$ satisfies $\psi(x)\in \mc{F}(\phi(x))$ for all $x$, $\psi$ and $\phi$ are both differentiable where either is differentiable and at such points $\phi$ and $\psi$ have the same normal derivative.
\end{claim}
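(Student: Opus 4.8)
The plan is to reduce the statement to the already-established fact (from the preceding Claim) that $D_{v,w}\phi$ can be computed by pre- and post-composing with smooth maps, and to the $C^1$-ness of the holonomy of $\mc{G}$. The key observation is that, by hypothesis, $\psi(x)$ lies in the same $\mc{G}$-leaf as $\phi(x)$ for every $x$ (the statement says $\psi(x)\in\mc{F}(\phi(x))$, but since $\psi$ and $\phi$ both intertwine $\mc{F}$ with $\mc{G}$ and $\psi$, $\phi$ land in $M_2$, the relevant relation is that $\psi(x)$ and $\phi(x)$ lie on a common leaf of $\mc{G}$ in $M_2$); hence $\Pi_W\circ\psi=\Pi_W\circ\phi$ as maps into any transversal $W$ of $\mc{G}$, because projecting along $\mc{G}$-leaves collapses the leaf through $\phi(x)$ to a single point.

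First I would fix a pair of transversals $T$ to $\mc{F}$ and $W$ to $\mc{G}$ with $\phi(T)\subset\mc{G}(W)$, and note that since $\psi$ maps each point of $T$ into the same $\mc{G}$-leaf as $\phi$ does, we also have $\psi(T)\subset\mc{G}(W)$. Then $\Pi_W\circ\phi\circ T$ and $\Pi_W\circ\psi\circ T$ are literally the same map $T\to W$: for $x\in T$, $\Pi_W(\phi(x))$ is the unique point of $W$ on $\mc{G}(\phi(x))=\mc{G}(\psi(x))$, which is $\Pi_W(\psi(x))$. Therefore one of the two compositions is $C^1$ (equivalently, differentiable at a given point) if and only if the other is, and when they are differentiable their derivatives coincide. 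By the definition of the normal derivative via $D(\Pi_W\circ\phi\circ T)(\partial_t)=(D_{v,w}\phi)\,w$, this immediately gives that $\phi$ is normally differentiable at a pair $(v,w)$ exactly when $\psi$ is, with $D_{v,w}\phi=D_{v,w}\psi$. The transversal-independence of this notion, already recorded in Definition \ref{defn:normal_derivative}, lets one pass from the chosen $T,W$ to arbitrary ones.

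I do not expect a genuine obstacle here; the only point requiring a little care is the bookkeeping about well-defined holonomy: one must check that the transversal $W$ chosen for $\phi$ is still an admissible transversal for $\psi$, i.e. that $\psi(T)$ does lie in $\mc{G}(W)$ and that $\mc{G}$ having well-defined holonomy guarantees $\Pi_W$ is single-valued on $\mc{G}(W)$ — but both are immediate from the hypotheses and from $\mc{G}(\psi(x))=\mc{G}(\phi(x))$. A second minor point is that "differentiable where either is differentiable" should be read pointwise (or almost-everywhere along a transversal, in the sense of Definition \ref{defn:normal_derivative}), and the argument above is entirely pointwise, so it applies verbatim in each of these regularity senses.
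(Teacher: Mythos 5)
Your proof is correct and follows the paper's argument exactly: the paper's one-line proof is precisely the observation that $\Pi_W\circ\phi\circ T=\Pi_W\circ\psi\circ T$ for any pair of transversals, from which everything follows. You have merely unpacked this, including rightly noting that the hypothesis $\psi(x)\in\mc{F}(\phi(x))$ should be read as $\psi(x)\in\mc{G}(\phi(x))$ since both land in $M_2$.
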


\begin{proof}
For a pair of transversals $W$ and $T$, note that $\Pi_W\circ \phi\circ T=\Pi_W\circ \psi\circ T$; the conclusion is immediate.
\end{proof}

\begin{claim}\label{claim:normal_deriv_coincides_with_old_deriv}
	If $\phi $ is $C^1$ and intertwines foliations as before, then the normal derivative agrees with the usual derivative in the appropriate sense. Namely, If $D\phi (v)=\lambda w$, then $D_{v,w}\phi=\lambda$. 

More broadly, if $T\mc{F}$ and $T\mc{G}$ are the tangents to $\mc{F}$ and $\mc{G}$ then,
\[
\Pi_{w}^{T\mc{G}}DF(v)=(D_{v,w}F)w,
\] 
where $\Pi_{w}^{T\mc{G}}$ denotes the projection onto the subspace spanned by $w$ along $T\mc{G}$.
\end{claim}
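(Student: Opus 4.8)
The plan is to reduce the claim to a direct computation in local charts adapted to the two foliations. First I would fix a point $x\in M_1$ and choose smooth transversals $T$ to $\mc{F}$ through $x$ and $W$ to $\mc{G}$ through $\phi(x)$ such that $\dot T(0)=v$ and $\dot W(0)=w$; such transversals exist because $v\notin T_x\mc{F}$ and $w\notin T_{\phi(x)}\mc{G}$. By definition of the normal derivative, $D_{v,w}\phi$ is the unique scalar with $D(\Pi_W\circ \phi\circ T)(\partial_t)=(D_{v,w}\phi)\,w$ at $t=0$. Since $\phi$ is assumed $C^1$, the chain rule applies to the composition $\Pi_W\circ\phi\circ T$, and I would compute $D(\Pi_W\circ\phi\circ T)(\partial_t)= D\Pi_W\big(D\phi(v)\big)$.

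The key step is then to identify $D\Pi_W$ at the relevant point. Because $\mc{G}$ is a $C^1$ foliation and $W$ is a smooth transversal, $\Pi_W$ is the holonomy projection onto $W$ along $\mc{G}$-leaves, so its differential at $\phi(x)$ is exactly the linear projection onto $T_{\phi(x)}W=\operatorname{span}(w)$ along $T_{\phi(x)}\mc{G}$; this is precisely the operator written $\Pi_w^{T\mc{G}}$ in the statement. (Here one uses that $T$ and $W$ being transversals means $T_{\phi(x)}M_2=T_{\phi(x)}\mc{G}\oplus\operatorname{span}(w)$, so the projection is well-defined.) Substituting, $D(\Pi_W\circ\phi\circ T)(\partial_t)=\Pi_w^{T\mc{G}}\big(D\phi(v)\big)$, and comparing with the defining equation for $D_{v,w}\phi$ gives $\Pi_w^{T\mc{G}}D\phi(v)=(D_{v,w}\phi)\,w$, which is the second displayed identity. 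The first displayed identity, that $D\phi(v)=\lambda w$ implies $D_{v,w}\phi=\lambda$, is then the special case: if $D\phi(v)=\lambda w$ then $\Pi_w^{T\mc{G}}D\phi(v)=\lambda \Pi_w^{T\mc{G}}(w)=\lambda w$ since $w$ is fixed by the projection onto its own span, so $D_{v,w}\phi=\lambda$ by uniqueness.

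There is no real obstacle here; the only point requiring a word of care is that the normal derivative as defined is independent of the choice of transversals $T$ and $W$ tangent to $v$ and $w$, but this was already noted in Definition~\ref{defn:normal_derivative} (the holonomies of $\mc{F}$ and $\mc{G}$ are $C^1$), so I would simply invoke that. I expect the proof to be three or four lines: set up the two transversals, apply the chain rule to $\Pi_W\circ\phi\circ T$, observe $D\Pi_W=\Pi_w^{T\mc{G}}$, and read off both conclusions.
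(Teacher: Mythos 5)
Your proof is correct and takes essentially the same approach as the paper: apply the chain rule to $\Pi_W\circ\phi\circ T$ and identify $D\Pi_W$ at $\phi(x)$ with the linear projection $\Pi_w^{T\mc{G}}$. The only organizational difference is that you prove the general identity first and obtain the special case $D\phi(v)=\lambda w\Rightarrow D_{v,w}\phi=\lambda$ by specialization, whereas the paper treats the two parts with separate (but equivalent) arguments; your structure is arguably a bit cleaner.
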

\begin{proof}
Extend $v$ and $w$ to a coordinate systems, then in coordinates the derivative of $\Pi_W\circ \phi\circ T$ is precisely the derivative of $\phi$ along $v$ paired with $w$ with respect to the coordinate pairing. The result is independent of the coordinate system because $\phi $ is $C^1$ and both results follow.

For the more general claim, as before, fix $C^1$ transversals $T$ and $W$ tangent to $v$ and $w$. 
The normal derivative $D_{v,w}F$ is equal to the $\lambda$ satisfying, for $C^1$ transversals 
\[
(D\Pi_W\circ \phi\circ T)(\partial_t)=(D_{v,w}\phi)w.
\]
As $\phi$ is $C^1$, we can rewrite this as:
\[
D\Pi_W\circ \phi\circ T=(D\Pi_W)(D\phi\circ T)(\partial_t)=\Pi_w^{T\mc{G}}(DF(v)),
\]
as desired.
\end{proof}

\begin{claim}
Suppose that $\phi$ intertwines two foliations $\mc{F}$ and $\mc{G}$ as before and that
$D_{v,w}\phi$ exists. If $v',w'$ are another pair of vectors with the same respective basepoints, then we can compute $D_{v',w'}\phi$ in the following way. Let $q$ be the tangent to $\mc{F}$ at $v$ and $r$ be the tangent to $\mc{F}$ at the foot of $w$. For a pair of vectors $a$ and $b$ in $\R^2$, write $\Pi^a_b$ for the projection onto $b$ along $a$. Then if $\Pi_{w'}^qw=\lambda w'$ and $\Pi_{v'}^r=\eta v$, then $D_{v',w'}\phi=\lambda\eta D_{v,w}\phi$.
\end{claim}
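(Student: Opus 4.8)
The plan is to reduce the computation of $D_{v',w'}\phi$ to the already-known quantity $D_{v,w}\phi$ by pre- and post-composing with explicit linear (hence $C^1$) maps and then reading off the Jacobian. First I would fix $C^1$ transversals: let $T$ be tangent to $v$ at $t=0$ with $dT/dt = v$, let $W$ be tangent to $w$ at the corresponding point with $dW/dt = w$, and similarly pick $T'$ tangent to $v'$ and $W'$ tangent to $w'$ at the same respective basepoints. Since $\phi$ intertwines $\mc{F}$ and $\mc{G}$, the map $\Pi_W\circ \phi\circ T\colon \R\to W$ is differentiable at $0$ with derivative $(D_{v,w}\phi)\,w$ by hypothesis. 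The goal is to compute $D(\Pi_{W'}\circ \phi\circ T')(\partial_t)$ at $0$.

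The key step is the factorization
\[
\Pi_{W'}\circ \phi\circ T' = (\Pi_{W'}\circ \Pi_W)\circ(\Pi_W\circ \phi\circ T)\circ (T^{-1}\circ \Pi_T\circ T').
\]
Each outer factor is a $C^1$ holonomy-type map of $1$-manifolds, so the chain rule applies even though $\phi$ itself is not $C^1$. It remains to identify the two Jacobians. For the right-hand factor $T^{-1}\circ \Pi_T\circ T'$, its derivative at $0$ measures how a displacement along $v'$ projects, along leaves of $\mc{F}$ (tangent direction $r$ at the foot of $v'$), onto the direction $v$; by Claim \ref{claim:normal_deriv_coincides_with_old_deriv} applied to the $C^1$ projection map, this is exactly the scalar $\eta$ with $\Pi_{v}^{r} v' = \eta v$ — which is the reciprocal/appropriate reading of the hypothesis $\Pi_{v'}^{r}(\,\cdot\,)=\eta v$, so I should be careful here about which vector is being projected onto which. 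Likewise the left-hand factor $\Pi_{W'}\circ \Pi_W$ has derivative at the relevant point equal to the scalar $\lambda$ determined by projecting $w$ along the $\mc{G}$-tangent direction $q$ onto $w'$, i.e. $\Pi_{w'}^{q} w = \lambda w'$, again by Claim \ref{claim:normal_deriv_coincides_with_old_deriv}. Multiplying the three scalars gives $D_{v',w'}\phi = \lambda\,\eta\,D_{v,w}\phi$, and the existence of $D_{v',w'}\phi$ is automatic from the existence of the composite derivative.

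The main obstacle I anticipate is purely bookkeeping: getting the orientation and the "projection onto which vector along which direction" conventions exactly consistent with Definition \ref{defn:normal_derivative} and with the statement's notation $\Pi^a_b$, since an inverted convention flips $\eta\mapsto \eta^{-1}$ and would make the formula wrong. I would resolve this by computing one explicit linear model — take $\mc{F}$, $\mc{G}$ to be the coordinate-axis foliations of $\R^2$, $\phi$ a linear upper-triangular map, and $v,w,v',w'$ explicit vectors — to pin down the correct reading, and then transfer the conclusion to the general case via the $C^1$-change-of-coordinates invariance already used to justify Definition \ref{defn:normal_derivative}. No genuinely hard analysis is involved once the factorization above is in place.
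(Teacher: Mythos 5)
Your proposed factorization $\Pi_{W'}\circ\phi\circ T'=(\Pi_{W'}\circ\Pi_W)\circ(\Pi_W\circ\phi\circ T)\circ(T^{-1}\circ\Pi_T\circ T')$ is exactly the identity the paper uses, and reading off the derivatives of the two outer holonomy factors as the scalars $\lambda$ and $\eta$ is precisely how the paper concludes; the redundant $\Pi_W$ is harmless by idempotence. Your worry about projection conventions is well-founded: as printed, ``$\Pi_{v'}^r=\eta v$'' has a type mismatch (projection onto $v'$ cannot return a multiple of $v$), and your corrected reading $\Pi_{v}^{r}v'=\eta v$ is the one that makes the formula $D_{v',w'}\phi=\lambda\eta D_{v,w}\phi$ come out right, as the explicit linear model you suggest would confirm.
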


\begin{proof}
As before, we can rewrite the formula that defines the normal derivative $D_{v',w'}\phi$ into one involving the normal derivative $D_{v,w}\phi$. Namely
\[
\Pi_{W'}\circ \phi\circ T=(\Pi_{W'})\circ \Pi_{W}\circ \phi\circ T\circ (T^{-1}\circ \Pi_T\circ T').
\]
The projections in the statement of the claim are the differentials of the terms in parentheses written in appropriate coordinates.
\end{proof}

\subsection{Normal Differentiability of the conjugacy}

\begin{lem}\label{lem:reg_on_transversals}
For the $\mc{S}^L$ and $\mc{S}^F$ foliations, for any pair $T,W$ of corresponding transversals, we have that $h$ is normally differentiable at almost every point of $T$. In fact, for uniform transversals $T$ and $W$, the induced map $\Pi_{W}h\colon T\to W$ is uniformly Lipschitz.
\end{lem}

\begin{proof}
We will show this only for the case of uniform transversals, as, by the preceding discussion, this implies normal differentiability for all transversals.
We will  show this by studying the convergence in Lemma \ref{lem:delallave}. 
Let $h_n=F^{-n}\circ h_0\circ L^n$ where $h_0$ is as in Lemma \ref{lem:first_smoothing}. We calculate the differential $D_zh_n$ with respect to the $[e_1^L,e_2^L]$, $[e_1^F,e_2^F]$ framings 
\[
Dh_n=
\begin{bmatrix}
\lambda^{-n} & -n\lambda^{-n+1}\\
0 & \lambda^{-n}
\end{bmatrix}
\begin{bmatrix}
1 & c(z)\\
0 & a(z)
\end{bmatrix}
\begin{bmatrix}
\lambda^{n} & n\lambda^{n-1}\\
0 & \lambda^{n}
\end{bmatrix}
=\begin{bmatrix}
1 & c_n(z)\\
0 & a(L^nz)
\end{bmatrix},
\]
where $a$ is a uniformly bounded continuous function and $c_n(z)$ is some continuous function depending on $n$. Fix uniform transversals $T\colon (-1,1)\to \mc{W}^{u,L}(x)$ and $W\colon (-1,1)\to \mc{W}^{u,F}(h(x))$ to the $\mc{S}^L$ and $\mc{S}^F$ foliations, respectively. We can write $\dot{T}$ as $\alpha_L(t)e_1^L+\beta_L(t)e_2^L$ and $\dot{W}$ as $\alpha_F(t)e_1^F+\beta_F(t)e_2^F$. The tangent to $h_n\circ T$ is then
\[
\begin{bmatrix}
\alpha(t)+\beta(t)c_n(T(t))\\
a(L^n(T(t)))\beta(t)
\end{bmatrix}.
\]

Let $\Pi_W$ be the projection to $W$ along $\mc{S}^F$ and let $e^W$ be the tangent field to $W$. 
 Then the differential of $\Pi_W\colon \mc{F}(W)\to W$, with respect to these framings $[e_1^F,e_2^F]$ and $[e^W]$, is 
\[
[0 , \omega]
\]
for some continuous function $\omega$.
We claim that $\omega$ is uniformly continuous.
Note that for a point $y$, $\omega(y)$ is the derivative of the map $\Pi_w\circ \eta$ for $\eta$ a uniform transversal tangent to $e_2^F(y)$; by Lemma \ref{lem:uniform_C_1_holonomies}, this derivative is uniformly bounded.

Thus the derivative of $\Pi_W \circ  h_n\circ T_L \colon \R\to W$ sends
\[
\partial_t \mapsto a(L^n(T(t)))\beta(t)\omega(F^{-n}k(L^n(x))) e_W,
\]
which is uniformly bounded. Thus the sequence of maps $\eta_{n}=\Pi_W\circ h_n\circ T_L$ is uniformly Lipschitz. As the sequence $\eta_{n}$ converges uniformly to the map $\Pi_W\circ h\circ T_L$, we see that this map is uniformly Lipschitz as well, and hence differentiable almost everywhere with uniformly bounded derivative.
\end{proof}

As noted after its definition,  the normal derivative is well defined independent of the transversals used to calculate it.  We now determine the normal derivative to $h$.

\begin{lem}\label{lem:normal_deriv_constant}
There exists $C$ such that for almost every $z$, $D_{e_2^L(z),e_2^F(h(z))}h=C$.
\end{lem}

\begin{proof}
We have that $F\circ h=h\circ L$. Both of these are normally differentiable almost everywhere to the foliations $\mc{S}^F$ and $\mc{S}^L$, thus they have the same normal derivatives. Writing $a(z)$ for the derivative $D_{e_2^L(z),e_2^F(h(z))}h$, by calculating the derivative of $h\circ L$ in two ways, we see that for almost every $z$,
\[
\lambda a(z)=a(L(z))\lambda.
\]
By measurable rigidity for $\R$-valued transfer functions, we see that $a$ coincides almost everywhere with a continuous transfer function. But such a continuous transfer function is constant on orbits hence is constant.
Thus there exists $C$ such that $D_{e_2^L,e_2^F(h(z)}h$ is equal to $C$ almost everywhere.
\end{proof}

We now give another smoothing argument to produce an approximation $k$ to $h$ with even better properties. This time we do not smooth in the normal direction because by Lemma \ref{lem:reg_on_transversals} $h$ already has some regularity in this direction.

\begin{lem}\label{lem:smoothing_with_C}
There exists $C\neq 0$ and a function $h_0\colon M\to M$ such that, writing $h_x$ for the restriction $h_0\vert_{\mc{W}^{u,L}(x)}$, we have that
\begin{enumerate}
\item
$h_0$ intertwines the $\mc{W}^{u,L}$ and $\mc{W}^{u,F}$ foliations as well as the $\mc{S}^L$ and $\mc{S}^F$ foliations;
\item
$h_x$ is uniformly $C^0$ close to $h$;
\item
$h_x$ is uniformly $C^{1}$;
\item
With respect to the framings $[e_1^L,e_2^L]$, $[e_1^F,e_2^F]$, the differential of $h_0$ is
\[
\begin{bmatrix}
1 & b\\
0 & C
\end{bmatrix},
\]
where $b$ is a uniformly continuous bounded function on each leaf and the boundedness is independent of the leaf.
\end{enumerate}
\end{lem}

\begin{proof}
We show how to do the smoothing for a particular leaf $\mc{W}^{u,L}(x_0)$. The result follows by smoothing on each leaf.

As in the proof of Lemma \ref{lem:first_smoothing}, we fix a uniform global transversal $T_F$ to the $\mc{S}^F$ foliation and $T_0$, a parameterization of an $\mc{F}^L$ leaf. Then we define $\omega(t)\in \R$ by
\[
\phi^{\omega(t)}h(T_0(t))\in T_F.
\]
Note that $\omega$ is bounded and uniformly $C^{\alpha}$ for some $\alpha>0$. Thus we may mollify $\omega$ to obtain $\wt{\omega}$, which is uniformly $C^1$ and uniformly close to $\omega$. We now define a new version of $h$, $h_{x_0}\colon \mc{W}^{u,L}(x_0)\to \mc{W}^{u,L}(h(x_0))$ using the $(t_1,t_2)=x_0+t_1e_1^L+t_2e_2^L$ coordinates on $\mc{W}^{u,L}$:
\[
h_{x_0}(t_1,t_2)=\phi^{\wt{\omega}(t_2)+t_1}\Pi_{T_F}\circ h(T_0(t_2)).
\]
We claim that this map satisfies the conclusion of the lemma. The first two claims follow as in the proof of Lemma \ref{lem:first_smoothing}. We will show the remaining claims by studying the derivatives of $h_{x_0}$ in uniform charts.

By Lemma \ref{lem:uniform_transversals_flow}, for all $t\in \R$, $\phi^t(T_F)$ is a uniform transversal to the $\mc{S}^F$ foliation. Hence we may restrict to using uniform chart on $\mc{W}^{u,F}$ formed from such transversals $\phi^t T_F$. This gives us a uniform chart $\psi^F=\psi_{\phi^{t_0}T_F}^F\colon (t_1,t_2)\mapsto \phi^{t_0+t_1}T_F(t_2)$. Let us take the chart $\psi^L\colon (t_1,t_2)\mapsto (x_0+t_0e_1^F)+t_1e_1^F+t_2e_2^F$ on $\mc{W}^L$. Then in these charts the map $(\psi^F)^{-1}\circ h_{x_0}\circ \psi^L=(\wt{\omega}(t_2)+t_1,t_2)$, which is uniformly $C^1$.

It now remains to verify the properties of the differential of $h_{x_0}$. That the differential has block form 
\[
\begin{bmatrix}
1 & b\\
0 &c
\end{bmatrix}
\]
for two continuous function $b$ and $c$ is immediate from its coordinate expression. In particular we can read off from this that $Dh_{x_0}(e_1^L)=e_1^F$. Also, note that $b$ is uniformly bounded as $\wt{\omega}$ is uniformly $C^1$.

It only remains to check that $c$ is constant. For this, it suffices to show that there exists a fixed $C\neq 0$ such that $Dh_{x_0}(e_2^L)=Ce_2^F+de_1^F$ for some function $d$. We check this using properties of the normal derivative. 
Because $h_{x_0}$ is $C^1$ and intertwines the $\mc{S}^L$ and $\mc{S}^F$ foliations, by Claim \ref{claim:normal_deriv_coincides_with_old_deriv}, we have that at places where $h_{x_0}$ is differentiable the normal derivative agrees with the actual derivative. This means that if $\Pi_{e_2^F}^{e_1^F}$ denotes the projection of a vector onto $e_2^F$ along $e_1^F$, then as $h_{x_0}$ is $C^1$, 
\[
ce_2^F=\Pi_{e_2^F}^{e_1^F}Dh_{x_0}(e_2^L)=(D_{e_2^L,e_2^F}h_{x_0})e_2^F.
\] 
So by Lemma \ref{lem:normal_deriv_constant}, at almost every point $D_{e_2^L,e_2^F}h_{x_0}=C$.  This implies that almost everywhere $c=C$. But $c$ is continuous so $c=C$ everywhere.
\end{proof}

\begin{lem}\label{lem:C_is_1}
In Lemma \ref{lem:smoothing_with_C}, we must have $C=1$.
\end{lem}
\begin{proof}
For the sake of contradiction, suppose that $C\neq 1$.
We write $h_n=F^{-n}\circ h_0\circ L^n$ and apply Lemma \ref{lem:delallave}. The differential of $h_n$ at $z$ is equal to:
\[
\lambda^{-n}\begin{bmatrix}
1 & -\lambda\\
0 & 1
\end{bmatrix}^n
\begin{bmatrix}
1 & b(L^n(z))\\
0 & C
\end{bmatrix}
\lambda^n
\begin{bmatrix}
1 & 1/\lambda\\
0 &1
\end{bmatrix}^n
=
\begin{bmatrix}
1 & \frac{n}{\lambda}-\frac{Cn}{\lambda}+b(L^n(z))\\
0 & C
\end{bmatrix}.
\]
Note that the upper right hand corner of this matrix is not only unbounded, but is going uniformly to infinity because $b(L^n(z))$ uniformly bounded. Let $\gamma\colon [0,1]\to \mc{W}^{u,L}$ be the curve $t\mapsto x+te_2^L$. Then by integrating the derivative of $h_n$, we see that arbitrarily small segments of $\gamma$ are stretched to uniform length by $h_n$ for sufficiently large $n$. But this implies that the sequence $h_n$ can have no uniform modulus of continuity, which contradicts the uniform convergence of $h_n$ to a continuous function $h$.
\end{proof}

We can now show that $h$ is Lipschitz on each leaf.

\begin{proof}[Proof of Lemma \ref{lem:lipschitz_conj}.]We can now conclude by applying Lemma \ref{lem:delallave} again. By applying Lemma \ref{lem:smoothing_with_C} and Lemma \ref{lem:C_is_1}, we obtain a map $h_0$ that is uniformly $C^{1+}$ on $\mc{W}^{u,L}$ leaves and such that the differential $Dh_0$ is, with respect to the frames $[e_1^L,e_2^L]$, $[e_1^F,e_2^F]$,
\[
\begin{bmatrix}
1 & a(z)\\
0 & 1
\end{bmatrix},
\]
where $a$ is uniformly bounded.
As before, let $h_n=F^{-n}\circ h_0\circ L^n$. Then we may calculate as before that the differentials of the functions $h_n$ are
\[
\begin{bmatrix}
1 & a(L^n(x))\\
0 & 1
\end{bmatrix}.
\]
But note that as these differentials are uniformly bounded that the sequence $h_n$ is uniformly Lipschitz. By Lemma \ref{lem:delallave}, as $h$ is the uniform limit of the $h_n$, we obtain that $h$ is uniformly Lipschitz on each leaf.
\end{proof}

\section{Differentiability of the conjugacy.}
We can now prove Proposition \ref{prop:framed_implies_smooth}.

\begin{proof}[Proof of Proposition \ref{prop:framed_implies_smooth}.]
The hypotheses include that $F$ is $U_{\lambda}$-framed. Hence by Lemma \ref{lem:lipschitz_conj}, $h$ is Lipschitz and hence differentiable almost everywhere along unstable leaves. Its derivative, with respect to the framings $[e_1^L,e_2^L]$ and $[e_1^F,e_2^F]$ satisfies
\[
\begin{bmatrix}
\lambda & 1\\
0 & \lambda
\end{bmatrix}D_xh=
D_{L(x)}h\begin{bmatrix}
\lambda & 1\\
0 & \lambda
\end{bmatrix}.
\]
 In particular, by \cite[Thm. 2.1]{kalinin2022local}, $Dh$ agrees almost everywhere with a H\"older continuous function. 
 In particular, this implies that $h$ is uniformly $C^{1+}$ along unstable manifolds as $h$ is Lipschitz and hence is the integral of its derivative. The same argument shows that $h$ is $C^{1+}$ along stable manifolds.  Thus by Journ\'e's lemma $h$ is $C^{1+}$ (see \cite{journe1988regularity} or \cite[Lem. 35]{dewitt2021local}).
\end{proof}

\appendix

\section{Conjugacy of Integer Matrices with Jordan Blocks}

The purpose of this appendix is to give the characterization of the elements of $\SL(4,\Z)$ we consider in this paper.

\begin{proof}[Proof of Proposition \ref{prop:integer_conjugacy_classes}.]
We first show that such a hyperbolic matrix is conjugate to a matrix of the given form.  Let $p(\lambda)$ denote the characteristic polynomial of $L$. Then because $L$'s eigenvalues each have multiplicity $2$ and $L$ is hyperbolic, we see that $p(\lambda)=q(\lambda)^2$ for some irreducible $q\in \Z[\lambda]$. By \cite[Thm. III.12]{newman1972integral}, any integer matrix $L$ is conjugate to a block upper triangular matrix, where the diagonal blocks of the matrix correspond to the irreducible factors of the characteristic polynomial of $L$. In our case, this implies that $L$ is conjugate to a matrix of the given form.
\end{proof}

\begin{rem}
It is possible that $A$ may not be conjugate to $B$ through integer matrices. For example, consider the automorphism defined by 
\begin{equation}
\begin{bmatrix}
3 & 2  & 1 & 0\\
4 & 3 & 0 & 1\\
0 & 0 & 0 & 1\\
0 & 0 & -1 & 6
\end{bmatrix}.
\end{equation}
\end{rem}

\section{Orientability of the unstable bundle}

The purpose of this section is to show that the unstable bundle of an Anosov diffeomorphism on a torus is orientable. This is relatively straightforward to show once one has introduced appropriate definitions.  The central idea is just that topological conjugacies preserve the notion of topological orientability of topological foliations. Hence, if we know a foliation is conjugate to an orientable topological foliation, we will use this information to upgrade that topological orientability to an orientation of the tangent to the foliation.

Let $(\Sigma,\mc{F})$ be a foliation of a $n$-manifold by leaves of dimension $k$ that is given by a foliation atlas $\{(U_i,\phi_i)\}$, where each $\phi_i\colon U_i\subset M\to \R^{k}\times \R^{n-k}$, so that the transition functions have the form
\[
\phi_i\phi_j^{-1}(x,y)=(\alpha_{ij}(x,y)),\gamma_{ij}(y)).
\]

Note that even if the maps $\phi_i$ are $C^0$, it makes sense to say whether the transition function is orientation preserving. This is because for a fixed $y_0$, $\alpha_{ij}(x,y_0)$ as a topological map may preserve or reverse the topological orientation at any given point, i.e. its map on local homology is $\pm 1$ at every point. If this map is $1$ at every point, then $\alpha_{ij}$ is orientation preserving. If $\alpha_{ij}$ is orientation preserving for each point in its domain, then we say that the transition function $\phi_i\phi_j^{-1}$ is orientation preserving

\begin{defn}
\cite[Sec. 2.3]{hector1981introduction} We say that a foliation $(\Sigma,\mc{F})$ is orientable if it admits an atlas such that all transition functions $\phi_i\phi_j^{-1}$, as above, are orientation preserving.
\end{defn}

When we speak of an orientation of a foliation $\mc{F}$ with uniformly $C^1$ leaves, what we mean is that there exists an orientation on $T\mc{F}$ as a continuous bundle. The following proposition says that if a foliation with uniformly $C^1$ leave is orientable as a topological foliation, then it is orientable.

\begin{prop}\label{prop:orientable_foliations_lemma}
Suppose that $M$ is a $C^1$ manifold and that $\mc{F}$ is a foliation of $M$ with uniformly $C^1$ $k$-dimensional leaves. Then if $\mc{F}$ admits an oriented topological foliation atlas, then $T\mc{F}$ is orientable as a continuous bundle over $M$.
\end{prop}

\begin{proof}
We will construct a continuous non-vanishing section of $\Omega^k T\mc{F}$, the space of $k$-forms on $T\mc{F}$. Fix a continuous Riemannian metric on $TM$. 

If we have a point $x\in M$, then $x$ is in the domain of some foliation chart $\phi\colon U\to \R^k\times \R^{n-k}$. 
If we let $U_x$ be the plaque of this chart containing $x$, then we may find a $C^1$ disk $D_x$ containing $x$ inside of $U_x$ whose orientation agrees with the chart orientation on the plaque. 
As $\mc{F}$ is uniformly $C^1$, we may fix a local smooth transversal foliation $\mc{T}$ defined in a neighborhood of $U_x$. 
This gives us holonomy maps $T_{x,y}\colon D\to D_y$, between $D$ and its image in the plaque $U_y$ via the $\mc{T}$ holonomies. 
Let $\omega\in \Omega^k TD$ be a choice of orientation on $D_x$ agreeing with the chart orientation. 
Then $D(T_{x,y})_*\omega$ gives an orientation on $D_y$. 
Thus on a neighborhood $V\subset M$ of $x$, we may define a continuous unit norm section $\hat{\omega}$ of $\Omega^k T\mc{F}$ by setting $\hat{\omega}=\omega/\|\omega\|$. 

Note that the oriented atlas of $\mc{F}$ gives at every point $z\in M$ a choice of generator of the local homology $[\mu_z]\in H_n(\mc{F}(z),\mc{F}(z)\setminus z)$. But for such a  $C^1$ manifold endowed with a metric, $[\mu_z]$ is associated to a unique element $\hat{\omega}_x\in \Omega^{\dim M}_z TM$ of unit norm. If we let $\{\mu_z\}_{z\in M}$ be the choice of generators of $H_n(\mc{F},\mc{F}(z)\setminus z)$ arising from the topological oriented foliation atlas, then observe that $[\mu_z]$ and $\hat{\omega}$ correspond to the same orientation. Thus we may extend the definition $\hat{\omega}$ globally because any section we construct agrees with $[\mu_z]$. $\hat{\omega}$ is then continuous due to its local construction.
\end{proof}

For a discussion of Anosov automorphims of nilmanifolds, see \cite[Sec. 2]{dewitt2021local}. Note that a torus is a nilmanifold, so the following result applies in our setting.
\begin{prop}\label{prop:Anosov_unstable_is_orientable}
Suppose that $F\colon N/\Gamma\to N/\Gamma$ is an Anosov diffeomorphism of a nilmanifold. Then the unstable bundle $E^{u,F}$ is orientable.
\end{prop}

\begin{proof}
By the work of Franks
\cite{franks1969anosov}
 and Manning \cite{manning1974there}
, there exists an Anosov automorphism $L\colon N/\Gamma\to N/\Gamma$ and a conjugacy $h$ between $F$ and $L$. From the discussion in \cite[Sec. 2.2]{dewitt2021local}, we see that the foliation $\mc{W}^{u,L}$ is orientable. Thus as $h$ intertwines the foliation $\mc{W}^{u,F}$ and $\mc{W}^{u,L}$, by Proposition \ref{prop:orientable_foliations_lemma}, $T\mc{W}^{u,F}=E^{u,F}$ is an orientable bundle.
\end{proof}

\bibliographystyle{amsalpha}
\bibliography{biblio.bib}

\providecommand{\bysame}{\leavevmode\hbox to3em{\hrulefill}\thinspace}
\providecommand{\MR}{\relax\ifhmode\unskip\space\fi MR }
\providecommand{\MRhref}[2]{%
  \href{http://www.ams.org/mathscinet-getitem?mr=#1}{#2}
}
\providecommand{\href}[2]{#2}
\begin{thebibliography}{KSW22}

\bibitem[DeW21]{dewitt2021local}
Jonathan DeWitt, \emph{Local {L}yapunov spectrum rigidity of nilmanifold
  automorphisms}, Journal of Modern Dynamics \textbf{17} (2021), 65--109.

\bibitem[dlL87]{de1987invariants}
Rafael de~la Llave, \emph{Invariants for smooth conjugacy of hyperbolic
  dynamical systems. {II}}, Communications in Mathematical Physics \textbf{109}
  (1987), no.~3, 369--378.

\bibitem[dlL92]{de1992smooth}
Rafael {d}e~{l}a Llave, \emph{Smooth conjugacy and {SRB} measures for uniformly
  and non-uniformly hyperbolic systems}, Communications in Mathematical Physics
  \textbf{150} (1992), no.~2, 289--320.

\bibitem[dlL02]{delallave2002rigidity}
\bysame, \emph{Rigidity of higher-dimensional conformal {A}nosov systems},
  Ergodic Theory and Dynamical Systems \textbf{22} (2002), no.~6, 1845--1870.

\bibitem[Fra69]{franks1969anosov}
John Franks, \emph{{A}nosov diffeomorphisms on tori}, Transactions of the
  American Mathematical Society \textbf{145} (1969), 117--124.

\bibitem[GH21]{gogolev2021smooth}
Andrey Gogolev and Federico~Rodriguez Hertz, \emph{Smooth rigidity for very
  non-algebraic {A}nosov diffeomorphisms of codimension one}, arXiv preprint
  arXiv:2105.10539 (2021).

\bibitem[GKS11]{gogolev2011local}
Andrey Gogolev, Boris Kalinin, and Victoria Sadovskaya, \emph{Local rigidity
  for {A}nosov automorphisms}, Mathematical Research Letters \textbf{18}
  (2011), no.~05, 843--858.

\bibitem[GKS20]{gogolev2020local}
\bysame, \emph{Local rigidity of {L}yapunov spectrum for toral automorphisms},
  Israel Journal of Mathematics \textbf{238} (2020), 389--403.

\bibitem[Gog08]{gogolev2008smooth}
Andrey Gogolev, \emph{Smooth conjugacy of {A}nosov diffeomorphisms on
  higher-dimensional tori}, Journal of Modern Dynamics \textbf{2} (2008),
  no.~4, 645.

\bibitem[GR22]{gogolev2022smooth}
Andrey Gogolev and Federico {Rodriguez Hertz}, \emph{Smooth rigidity for very
  non-algebraic expanding maps}, Journal of the European Mathematical Society
  (2022).

\bibitem[HH81]{hector1981introduction}
Gilbert Hector and Ulrich Hirsch, \emph{Introduction to the geometry of
  foliations, {P}art {A}}, Vieweg, 1981.

\bibitem[Izz99]{izzo1999}
Alexander Izzo, \emph{{$C^r$} convergence of {P}icard’s successive
  approximations}, Proceedings of the American Mathematical Society
  \textbf{127} (1999), no.~7, 2059--2063.

\bibitem[Jou88]{journe1988regularity}
Jean-Lin Journ{\'e}, \emph{A regularity lemma for functions of several
  variables}, Revista Matematica Iberoamericana \textbf{4} (1988), no.~2,
  187--193.

\bibitem[Kal20]{kalinin2020nonstationary}
Boris Kalinin, \emph{Non-stationary normal forms for contracting extensions}.

\bibitem[KH97]{katok1997introduction}
Anatole Katok and Boris Hasselblatt, \emph{Introduction to the modern theory of
  dynamical systems}, Cambridge University Press, 1997.

\bibitem[KS13]{kalinin2013cocycles}
Boris Kalinin and Victoria Sadovskaya, \emph{Cocycles with one exponent over
  partially hyperbolic systems}, Geometriae Dedicata \textbf{167} (2013),
  no.~1, 167--188.

\bibitem[KSW22]{kalinin2022local}
Boris Kalinin, Victoria Sadovskaya, and Zhenqi~Jenny Wang, \emph{Local rigidity
  for hyperbolic toral automorphisms}, arXiv preprint arXiv:2207.02321 (2022).

\bibitem[LY85]{ledrappier1985metric}
Fran{\c{c}}ois Ledrappier and Lai-Sang Young, \emph{The metric entropy of
  diffeomorphisms: Part {I}: Characterization of measures satisfying {P}esin's
  entropy formula}, Annals of Mathematics (1985), 509--539.

\bibitem[Man74]{manning1974there}
Anthony Manning, \emph{There are no new {A}nosov diffeomorphisms on tori},
  American Journal of Mathematics \textbf{96} (1974), no.~3, 422--429.

\bibitem[MM87]{marco1987invariantsI}
Jos{\'e}~Manuel Marco and Roberto Moriyon, \emph{Invariants for smooth
  conjugacy of hyperbolic dynamical systems. {I}}, Communications in
  Mathematical Physics \textbf{109} (1987), no.~4, 681--689.

\bibitem[New72]{newman1972integral}
Morris Newman, \emph{Integral matrices}, Academic Press, 1972.

\bibitem[NT98]{nitica1998regularity}
Viorel Nitica and Andrew T{\"o}r{\"o}k, \emph{Regularity of the coboundary for
  cohomologous cocycles}, Ergodic Theory and Dynamical Systems \textbf{18}
  (1998), 1187--1209.

\bibitem[PSW97]{pugh1997holder}
Charles Pugh, Michael Shub, and Amie Wilkinson, \emph{H{\"o}lder foliations},
  Duke Mathematical Journal \textbf{86} (1997), no.~3, 517--546.

\bibitem[Sad13]{sadovskaya2013cohomology}
Victoria Sadovskaya, \emph{Cohomology of {$\GL(2,\R)$}-valued cocycles over
  hyperbolic systems}, Discrete and Continuous Dynamical Systems - Series A
  \textbf{33} (2013), no.~5, 2085--2104.

\bibitem[SY19]{saghin2019lyapunov}
Radu Saghin and Jiagang Yang, \emph{Lyapunov exponents and rigidity of {A}nosov
  automorphisms and skew products}, Advances in Mathematics \textbf{355}
  (2019), 106764.

\end{thebibliography}

\end{document}